\newtheorem{theorem}{Theorem}[section]
\newtheorem{prop}[theorem]{Proposition}
\newtheorem{lemma}[theorem]{Lemma}
\title{Almost sure upper bound for a model problem for multiplicative chaos in number theory}
\author{Rachid Caich }
\date{\today}
\begin{document}

\maketitle
\begin{abstract}
The goal of this work is to prove a new  sure upper bound in a setting that can be
thought of as a simplified function field analogue. This result is comparable to a recent result of the author concerning  almost sure upper bound of random multiplicative functions. Having a simpler quantity allows us to make the proof more accessible.\\
   \textbf{Keywords:} Random multiplicative functions, large fluctuations,  law of iterated logarithm, mean values of multiplicative functions, Doob’s inequality, Hoeffding’s inequality, martingales.\\
   \textbf{2000 Mathematics Subject Classification:} 11N37, (11K99, 60F15).
\end{abstract}

\section{Introduction}
Let $(X(k))_{k\geqslant 1}$ be a sequence of independent standard complex Gaussian random variables, where the real and imaginary parts of $X(k)$ are independently distributed like real Gaussian random variables with mean $0$ and variance $\frac{1}{2}$. Consider a sequence of random variables $(A(n))_{n\geqslant 0}$ defined by the formal power series identity 
\begin{equation}\label{link_xk_an}
    \exp\bigg( \sum_{k=1}^{+\infty} \frac{X(k)}{\sqrt{k}}z^k \bigg)= \sum_{n=0}^{+\infty} A(n)z^n.
\end{equation}
Let $\mathcal{P}$ be the set of the prime numbers, \textit{a Steinhaus random multiplicative function} is obtained by letting $(f(p))_{p\in \mathcal{P}}$ be a sequence of independent Steinhaus random variables (i.e distributed uniformly on the unit circle $\{ |z|=1\}$), and then setting
$$ f(n):= \prod_{p^{a}||n} f(p)^{a}  \text{ for all } n\in \mathbb{N},$$
where $ p^{a}||n$ means that $p^{a}$ is the highest power of $p$ that divides $n$. In two recent papers (\cite{model_zaman} and \cite{model_maxim}), the sequence of random variables $(A(n))_{n\geqslant 0}$ has been interpreted as an analogue to the Steinhaus random multiplicative function. \\
Recently, there has been much focus regarding the moments and almost sure bounds for the mean values of random multiplicative functions. For the lower bound, Harper \cite{Harper3} proved, using a \textit{Multiplicative Chaos} techniques, that for any function $V(x)$ tending to infinity with $x$, there almost surely exists arbitrarily large values of $x$ for which
\begin{equation}\label{Harper_lower_b}
    \big|M_f(x) \big| \gg \frac{\sqrt{x} (\log_2 x)^{1/4}}{V(x)}.
\end{equation}
where $M_f(x):=\sum_{n\leqslant x}f(n)$. Here and in the sequel $\log_k$ denotes the k-fold iterated logarithm. In \cite{Harper}, Harper proved when $x \to +\infty$
\begin{equation}\label{Harper_approximation}
    \mathbb{E}\bigg[ \big| M_f(x)\big| \bigg] \asymp \frac{\sqrt{x}}{(\log_2 x)^{1/4}}.
\end{equation}
This discrepancy of a factor $\sqrt{\log_2 x}$ between the first moment and the almost sure behaviour is similar to the Law Iterated Logarithm for independent random variables. For this reason Harper conjectured that for any fixed $\varepsilon >0$, we might have almost surely, as $x \to +\infty$
\begin{equation}\label{Harper_upper_b}
    M_f(x)\ll \sqrt{x} (\log_2 x)^{1/4+\varepsilon}
\end{equation}
(see the introduction in \cite{Harper3} for more details). The author in \cite{Rachid} proved 
\begin{equation}\label{Rachid_upper_bound}
    M_f(x)\ll \sqrt{x} (\log_2 x)^{3/4+\varepsilon}.
\end{equation}
In \cite{model_zaman}, Soundararajan and Zaman were motivated by the outcome and examined the moments of $A(n)$, revealing that they resemble those in the random multiplicative functions. They proved the analogue of \eqref{Harper_approximation}, that we have
$$
\mathbb{E}[|A(n)|] \asymp \frac{1}{(\log n)^{1/4}}.
$$
More recently, in \cite{model_maxim}, Gerspach, proved the analogue of \eqref{Harper_lower_b} that for any function $V(n)$ tending to infinity with $n$, there almost surely exist arbitrarily large values of $n$ for which
$$
|A(n)| \geqslant \frac{(\log n)^{1/4}}{V(n)}.
$$
The main goal of this paper is to prove the analogue of the almost sure inequality \eqref{Rachid_upper_bound}.
\begin{theorem}\label{theoreme_principal}
Let $\varepsilon >0$. Let $(A(n))_{n\geqslant 0}$ as defined in \eqref{link_xk_an}. We have almost surely,  as $n$ tends infinity
\begin{equation}
    A(n) \ll (\log n)^{\frac{3}{4}+ \varepsilon}.
\end{equation}
\end{theorem}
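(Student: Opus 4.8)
Fix $\varepsilon>0$. The statement is the function-field analogue of the bound $M_f(x)\ll\sqrt x(\log_2 x)^{3/4+\varepsilon}$ from \cite{Rachid}, and we follow the same scheme, with the $\asymp\log_2 x$ dyadic blocks of primes up to $x$ replaced here by the $\asymp\log n$ blocks of integers $k\leqslant n$; this dictionary is why the exponent $\tfrac34$ reappears, the bound being the first moment $\mathbb E|A(n)|\asymp(\log n)^{-1/4}$ multiplied by a power $1+\varepsilon$ of the number of blocks. First I would reduce, by Borel--Cantelli, to a maximal estimate over a block of scales: with $N_j=\lceil e^{j}\rceil$ it suffices to show $\sum_{j\geqslant2}\mathbb P\big[\max_{N_j\leqslant n<N_{j+1}}|A(n)|>(\log N_j)^{3/4+\varepsilon}\big]<\infty$. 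Passing to the maximum over a whole block is unavoidable, since the law of a single $|A(n)|$ has only power-law decay in $\log n$, which is not summable over $n$; one must exploit that the $A(n)$ with $n$ in one block are strongly correlated. On a fixed block $[N,N')$ with $N'\asymp N$, put $r=1-1/N'$ and use
\[
A(n)=\frac{r^{-n}}{2\pi}\int_{0}^{2\pi}F_{r}(\theta)e^{-in\theta}\,d\theta,\qquad F_{r}(\theta):=\exp\Big(\sum_{k\geqslant1}\tfrac{X(k)}{\sqrt k}r^{k}e^{ik\theta}\Big),
\]
so that $r^{-n}\asymp1$, $A(n)$ is a high Fourier coefficient of the random analytic function $F_r$, and $|F_r(\theta)|=\exp(\mathcal G_r(\theta))$ with $\mathcal G_r$ a real, logarithmically correlated Gaussian field of variance $\tfrac12\log\tfrac1{1-r^2}=\tfrac12\log N+O(1)$; this is Harper's $L^2$-critical regime, consistent with $\mathbb E|A(n)|\asymp(\log n)^{-1/4}$.

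Next I would set up the multiplicative-chaos/martingale decomposition. Split the frequencies into geometric blocks $I_\ell=(e^\ell,e^{\ell+1}]$ and factor $F_r=\prod_{\ell\geqslant0}E_\ell$ with $E_\ell(\theta)=\exp(\sum_{k\in I_\ell}\tfrac{X(k)}{\sqrt k}r^ke^{ik\theta})$, so $(\prod_{\ell\leqslant j}E_\ell)_{j\geqslant0}$ is a martingale for the filtration generated by the $X(k)$. Fix a cutoff $\ell_0$ of size a large multiple of $\log\log N$ and define the good event $\mathcal G$ on which, for every $\theta$ in a net of cardinality polynomial in $N$ and every $j$, the partial product $\prod_{\ell\leqslant j}|E_\ell(\theta)|$ stays within $\exp(\eta\cdot(\text{partial variance})+O(1))$ of its typical size, for a small fixed $\eta>0$. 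For fixed $\theta$ the increments $\log|E_\ell(\theta)|$ are independent with well-controlled tails, so after a harmless truncation Hoeffding's inequality bounds $\mathbb P[\mathcal G^{c}]$, and a union bound over the net and over the $O(\log N)$ levels keeps $\sum_j\mathbb P[\mathcal G^{c}\text{ on block }j]$ finite.

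On $\mathcal G$ I would then condition on the $\sigma$-algebra $\mathcal F_{\ell_0}$ generated by the low frequencies $k\leqslant e^{\ell_0+1}$ and invoke Harper's better-than-$L^2$ estimate for critical chaos: conditionally, the first absolute moment of the relevant high Fourier coefficient is bounded by $(\log N)^{-1/4}$ times a local $L^2$ mass of the low-frequency Euler product over a short interval, the crucial gain over the trivial second-moment bound. Summing the squares of these bounds over a dissection of $[0,2\pi]$ into $\asymp N$ intervals of length $\asymp1/N$ and running Doob's maximal inequality in the variable $\ell_0$ — applied to the non-negative submartingale formed by the local masses — controls, on $\mathcal G$, the quantity $\max_{N\leqslant n<N'}|A(n)|^2$ by something of conditional expectation $\ll(\log N)^{3/2+2\varepsilon}$. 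A Markov inequality then bounds $\max_{N\leqslant n<N'}|A(n)|$ by $(\log N)^{3/4+\varepsilon}$ off an event of summably small probability; combined with the previous step and summed over blocks, Borel--Cantelli finishes the proof after renaming $\varepsilon$.

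I expect the heart of the difficulty to be this last step: a naive $L^2$ bound ($|A(n)|^2\leqslant\sum_m|A(m)|^2r^{2m}\ll N$ in expectation) is off by a power of $N$, so one genuinely needs Harper's multiplicative-chaos input — that a typical high Fourier coefficient of the critical chaos is much smaller than its second moment suggests — together with a form of it robust enough to survive a maximum over the $\asymp N$ coefficients of a block. Making $\mathcal G$ simultaneously likely enough to survive the union bound over a polynomially large net and strong enough to feed the conditional chaos estimate, and calibrating $\ell_0$, the net fineness, and the Hoeffding truncation so that all the powers of $\log N$ balance exactly at $\tfrac34+\varepsilon$, is where essentially all the work lies; the loss of a full power of $\log N$ over $\mathbb E|A(n)|\asymp(\log n)^{-1/4}$, rather than the conjecturally optimal half power, is the price of passing from an expectation statement to an almost sure one via a union bound, exactly as in \cite{Rachid}.
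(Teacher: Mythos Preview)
Your proposal takes a genuinely different route from the paper and, as written, has a gap at the closing step. The paper does \emph{not} work with the Fourier-integral representation of $A(n)$ or try to control $|F_r(\theta)|$ pointwise over a net in $\theta$. Instead it decomposes $A(n)=\sum_{|\lambda|=n}a(\lambda)$ according to the largest part $\lambda_1$ of the partition and its multiplicity: the main piece is
\[
A_1(n)=\sum_{y_0<k\leqslant n}\frac{X(k)}{\sqrt k}\,B_k(n),\qquad B_k(n)=\sum_{\substack{|\lambda|=n-k\\ \lambda_1<k}}a(\lambda),
\]
which is a bona fide martingale-difference sum in $k$ for the filtration $\mathcal F_k=\sigma(X(1),\dots,X(k-1))$. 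A Hoeffding/Azuma-type inequality (Lemma~\ref{updated_hoeffding}) is then applied \emph{directly to $A_1(n)$}, on the event that the predictable variation $V(n)=\sum_k k^{-1}|B_k(n)|^2$ is small; $V(n)$ in turn is controlled via the Soundararajan--Zaman chaos moment (Lemma~\ref{multiplicative_chaos}) through a supermartingale/Doob argument in the frequency cutoff. With the sparse blocking $X_\ell=2^{\ell^K}$, $K=25/\varepsilon$, Hoeffding yields a per-$n$ tail $\exp(-c\,\ell^{K+44})$, which annihilates the $2^{\ell^K}$ values of $n$ in a block and makes $\sum_\ell\mathbb P[\mathcal B_\ell]$ converge. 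The remaining pieces $A_0,A_2,A_3$ (largest part below the cutoff, or of multiplicity $2$ or $\geqslant3$) are handled either by a crude second-moment bound or by the same scheme.

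The gap in your plan is the last Markov step. If on your good event $\mathcal G$ you only know that $\max_{N\leqslant n<N'}|A(n)|^2$ is dominated by a random variable of conditional expectation $\ll(\log N)^{3/2+2\varepsilon}$, then Markov against the threshold $(\log N)^{3/2+2\varepsilon}$ gives a probability bound that is $O(1)$, not summable over your blocks $N_j=\lceil e^j\rceil$; and no realistic strengthening of that expectation is available, since already $\mathbb E[|A(n)|^2]=1$ for each of the $\asymp N$ values of $n$ in the block. A first- or second-moment bound on the maximum cannot by itself produce a summable tail here: one needs an \emph{exponential} concentration inequality for $A(n)$ itself, and that requires exhibiting $A(n)$ (or its dominant part) as a martingale-difference sum with a predictable variance proxy --- precisely what the largest-part decomposition supplies and the Fourier-analytic framing does not. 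Your use of Hoeffding on $\log|E_\ell(\theta)|$ to control $\mathbb P[\mathcal G^c]$ is fine, but on $\mathcal G$ you still need Azuma for $A(n)$, not Markov, to finish; in effect you have placed Hoeffding on the wrong object.
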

\noindent We aim to enhance and simplify the demonstration of theorem 1.1 given in \cite{Rachid}, in the case for the Steinhaus and Rademacher multiplicative function. Our objective is to create a gentle  proof for those seeking a comprehensible grasp of the theorem in the context of this model.
\section{Preliminaries}
\subsection{Notation}
Let's start by some definitions. Let $(\Omega, \mathcal{F}, \mathbb{P})$ be a probabilistic space. We call a \textit{filtration} every sequence $(\mathcal{F}_n)_{n \geqslant 1}$ of increasing sub-$\sigma$-algebras of $\mathcal{F}$. We say that a sequence of real random variables $(Z_n)_{n \geqslant 1}$ is \textit{submartingale} (resp. \textit{supermartingale}) sequence with respect to the filtration $(\mathcal{F}_n)_{n \geqslant 1}$, if the following properties are satisfied:\\
- $Z_n$ is $\mathcal{F}_n$ measurable\\
- $\mathbb{E}[|Z_n|]< +\infty$\\
- $\mathbb{E}[Z_{n+1} \, | \, \mathcal{F}_n] \geqslant Z_n$ (resp. $\mathbb{E}[Z_{n+1}\, | \,\mathcal{F}_n] \leqslant  Z_n$ ) almost surely.\\
We say that $(Z_n)_{n \geqslant 1}$ is martingale difference sequence with respect to the same filtration $(\mathcal{F}_n)_{n \geqslant 1}$ if \\
- $Z_n$ is $\mathcal{F}_n$ measurable\\
- $\mathbb{E}[|Z_n|]< +\infty$\\
- $\mathbb{E}[Z_{n+1} \, | \,\mathcal{F}_n] =0$ almost surely.\\
An event $ E \in \mathcal{F}$ happens \textit{almost surely} if  $\mathbb{P}[E]=1$. \\
Let $Z$ be a random variable and let $\mathcal{H}_1\subset \mathcal{H}_2 \subset \mathcal{F}$ some sub-$\sigma$-algebras,  we have
$$
\mathbb{E}\big[ \mathbb{E}\big[ Z \, \big| \,\mathcal{H}_2 \big] \,\big| \, \mathcal{H}_1  \big]=\mathbb{E}\big[ Z\, \big| \, \mathcal{H}_1 \big].
$$
\subsection{Some properties}
We follow the notations of Soundararajan and Zaman in \cite{model_zaman}. Let $(X(k))_{k\geqslant 1}$ be a sequence of independent standard complex Gaussian random variables. By a partition $\lambda $ we mean a non-increasing sequence of non-negative integers $\lambda_1 \geqslant \lambda_2 \geqslant ...,$ with $\lambda_n =0$ from a certain point onwards. We denote by $|\lambda| = \lambda_1 + \lambda_2 + \lambda_3 ...,$ and for each integer $k \geqslant 1$ we denote by $m_k=m_k(\lambda)$ the number of parts in $\lambda$ that equal to $k$. 
With this notations, let
\begin{equation}\label{a(lambda)_def}
    a(\lambda):= \prod_{k\geqslant 0} \bigg( \frac{X(k)}{\sqrt{k}}\bigg)^{m_k} \frac{1}{m_k! },
\end{equation}
we have then
$$
\exp\bigg( \sum_{k=1}^{+\infty} \frac{X(k)}{\sqrt{k}}z^k \bigg)= \sum_{\lambda} a(\lambda) z^{|\lambda|},
$$
thus, for every $n\geqslant 0$
$$
A(n)=\sum_{|\lambda|=n} a(\lambda).
$$
Note that for a standard complex Gaussian $Z$, we have
$$
\mathbb{E}\big[Z^n \overline{Z}^m\big]=\left\{
  \begin{array}{@{}ll@{}}
    n!  & \text{if}\ n =m, \\
    0 & \text{otherwise.}
  \end{array}\right.
$$
It follows that if $\lambda$ and $\lambda^{\prime}$ for two different partitions 
$$
\mathbb{E}\big[a(\lambda)\overline{a(\lambda^{\prime})} \big] =0.
$$
If $\lambda=\lambda^{\prime}$ then
$$\mathbb{E}\big[ |a(\lambda)|^2\big]=\prod_{k\geqslant 0}  \frac{1}{k^{m_k}m_k!^2} \mathbb{E}\big[|X(k)|^{2m_k}\big]= \prod_{k\geqslant 0}  \frac{1}{k^{m_k}m_k!}.$$
Thus
$$
\mathbb{E}\big[ |A(n)|^2\big] = \sum_{|\lambda|=n} \mathbb{E}\big[|a(\lambda)|^2\big] =\sum_{|\lambda|=n} \prod_{k\geqslant 0}  \frac{1}{k^{m_k}m_k!} =1.
$$
The final step is derived from the well-known formula for calculating the number of permutations in the symmetric group $S_n$ whose cycle decomposition corresponds to the partition $\lambda$.\\
One can study $A(n)$ through the generating function. Note that by Cauchy's Theorem, for $n\leqslant R$, we have
$$
A(n)= \frac{1}{2 \pi i}\int_{|z|=1} F_R(z)\frac{{\rm d}z}{z^{n+1}}
$$
where 
\begin{equation}\label{F_K}
    F_R(z):= \exp\bigg(\sum_{k\leqslant R} \frac{X(k)}{\sqrt{k}}z^k\bigg).
\end{equation}
We start our proof by stating some tools.
\begin{lemma}{(Borel--Cantelli's First Lemma).} \label{borelcantelli} Let $(A_n)_{n \geqslant 1}$ be sequence of events. Assuming that $\sum_{n=1}^{+ \infty}\mathbb{P}[A_n] < + \infty $ then $\mathbb{P}[\limsup_{n \to + \infty} A_n]=0$.

\end{lemma}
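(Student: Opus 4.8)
The plan is to unwind the definition of the event $\limsup_{n\to\infty}A_n$ and then control its probability by a tail of the convergent series $\sum_n\mathbb{P}[A_n]$. Recall that, by definition,
$$
\limsup_{n\to\infty}A_n=\bigcap_{n\geqslant 1}\bigcup_{k\geqslant n}A_k,
$$
which is precisely the event that infinitely many of the $A_n$ occur. In particular, for every fixed $n\geqslant 1$ one has the inclusion $\limsup_{k\to\infty}A_k\subseteq\bigcup_{k\geqslant n}A_k$, since the left-hand side is an intersection over all $n$ of the sets on the right.

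Next I would apply monotonicity of $\mathbb{P}$ together with countable subadditivity: for each $n\geqslant 1$,
$$
\mathbb{P}\Big[\limsup_{k\to\infty}A_k\Big]\leqslant\mathbb{P}\Big[\bigcup_{k\geqslant n}A_k\Big]\leqslant\sum_{k\geqslant n}\mathbb{P}[A_k].
$$
Since the series $\sum_{k\geqslant 1}\mathbb{P}[A_k]$ converges by hypothesis, its tail $\sum_{k\geqslant n}\mathbb{P}[A_k]$ tends to $0$ as $n\to\infty$. As the left-hand side above does not depend on $n$, letting $n\to\infty$ forces $\mathbb{P}[\limsup_{k\to\infty}A_k]=0$, which is the claim. (Equivalently, one may first invoke continuity of $\mathbb{P}$ along the decreasing sequence of events $B_n:=\bigcup_{k\geqslant n}A_k$, whose intersection is $\limsup_k A_k$, to obtain $\mathbb{P}[\limsup_k A_k]=\lim_{n\to\infty}\mathbb{P}[B_n]$, and then bound each $\mathbb{P}[B_n]$ by the same tail sum.)

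There is essentially no obstacle here: the argument uses only elementary properties of a probability measure — monotonicity, countable subadditivity, and, in the alternative formulation, continuity along monotone sequences — together with the fact that the tail of a convergent series vanishes. The single point deserving a word of care is that the displayed bound must be established for \emph{every} $n$ before passing to the limit, so that the $n$-independent quantity $\mathbb{P}[\limsup_k A_k]$ can be squeezed below an arbitrarily small tail; this is immediate from the inclusion noted in the first step.
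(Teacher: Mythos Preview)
Your proof is correct and is precisely the standard argument for the first Borel--Cantelli lemma. The paper itself does not give a proof but merely cites a textbook (theorem~18.1 in Gut), so your write-up is strictly more self-contained; beyond that there is nothing to compare.
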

\begin{proof}[Proof]
See theorem 18.1 in \cite{Gut}.
\end{proof}

\begin{lemma}\label{updated_hoeffding}
    Let $Z=(Z_n)_{1 \leqslant   n \leqslant   N}$ be a complex martingale difference sequence with respect to a filtration $\mathcal{F}=(\mathcal{F}_n)_{1 \leqslant  n \leqslant  N }$.
    We assume that for each $n$, $Z_n$ is bounded almost surely.
    Furthermore, assume that we have $|Z_n| \leqslant   S_n$ almost surely, where $(S_n)_{1 \leqslant  n \leqslant  N }$ is a real predictable process with respect to the same filtration. We set the event $\Sigma:= \bigg\{ \sum_{1\leqslant   n \leqslant   N}S_n^2 \leqslant   T\bigg\}$ where $T$ is a  deterministic constant.
    Then, for any $\varepsilon >0$,
    $$ \mathbb{P}\bigg[\bigg\{\bigg|\sum_{1\leqslant   n \leqslant   N}Z_n\bigg|\geqslant \varepsilon\bigg\}\, \bigcap \, \Sigma \bigg] \leqslant   2\exp\bigg(\frac{-\varepsilon^2}{10T}\bigg).$$
\end{lemma}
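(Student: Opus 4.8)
The plan is to reduce the complex inequality to a real one and then run a Chernoff-type argument built on the conditional form of Hoeffding's lemma, intersecting with $\Sigma$ at the level of exponential moments. First I would split into real and imaginary parts: since $\mathbb{E}[Z_{n+1}\mid\mathcal{F}_n]=0$ forces $\mathbb{E}[\mathrm{Re}\,Z_{n+1}\mid\mathcal{F}_n]=\mathbb{E}[\mathrm{Im}\,Z_{n+1}\mid\mathcal{F}_n]=0$, both $(\mathrm{Re}\,Z_n)_{n}$ and $(\mathrm{Im}\,Z_n)_{n}$ are real martingale difference sequences for the same filtration, and $|\mathrm{Re}\,Z_n|,|\mathrm{Im}\,Z_n|\leqslant|Z_n|\leqslant S_n$ almost surely. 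From $\bigl|\sum_n Z_n\bigr|^2=\bigl|\sum_n\mathrm{Re}\,Z_n\bigr|^2+\bigl|\sum_n\mathrm{Im}\,Z_n\bigr|^2$ we get the inclusion $\{|\sum_n Z_n|\geqslant\varepsilon\}\subseteq\{|\sum_n\mathrm{Re}\,Z_n|\geqslant\varepsilon/\sqrt2\}\cup\{|\sum_n\mathrm{Im}\,Z_n|\geqslant\varepsilon/\sqrt2\}$, so by a union bound it is enough to prove the following real statement: for every real martingale difference sequence $(Y_n)_{1\leqslant n\leqslant N}$ with $|Y_n|\leqslant S_n$ almost surely, $\mathbb{P}\bigl[\{|\sum_n Y_n|\geqslant\delta\}\cap\Sigma\bigr]\leqslant 2e^{-\delta^2/(2T)}$ for every $\delta>0$; one then applies it with $\delta=\varepsilon/\sqrt2$ to each part.

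To prove the real statement I would fix $\theta\geqslant0$ and consider
$$M_n:=\exp\Bigl(\theta\sum_{k\leqslant n}Y_k-\tfrac{\theta^2}{2}\sum_{k\leqslant n}S_k^2\Bigr),\qquad 0\leqslant n\leqslant N.$$
Because $S_k$ is $\mathcal{F}_{k-1}$-measurable, conditionally on $\mathcal{F}_{k-1}$ the variable $Y_k$ is centred and takes values in $[-S_k,S_k]$, so the conditional form of Hoeffding's lemma gives $\mathbb{E}[e^{\theta Y_k}\mid\mathcal{F}_{k-1}]\leqslant e^{\theta^2 S_k^2/2}$ and hence $\mathbb{E}[M_n\mid\mathcal{F}_{n-1}]\leqslant M_{n-1}$; with the convention $\mathcal{F}_0=\{\emptyset,\Omega\}$ and empty sums equal to $0$ we have $M_0=1$, and the almost sure boundedness of the $Z_n$ makes each $M_n$ bounded, hence integrable, so $(M_n)$ is a non-negative supermartingale and $\mathbb{E}[M_N]\leqslant1$. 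On $\Sigma$ the exponent obeys $-\tfrac{\theta^2}{2}\sum_{k\leqslant N}S_k^2\geqslant-\tfrac{\theta^2}{2}T$, so $M_N\,\mathbb{1}_{\Sigma}\geqslant e^{-\theta^2 T/2}\exp\!\bigl(\theta\sum_{k\leqslant N}Y_k\bigr)\mathbb{1}_{\Sigma}$; taking expectations yields $\mathbb{E}\bigl[\exp(\theta\sum_{k\leqslant N}Y_k)\mathbb{1}_{\Sigma}\bigr]\leqslant e^{\theta^2 T/2}$, and then Markov's inequality with the choice $\theta=\delta/T$ gives $\mathbb{P}\bigl[\{\sum_{k\leqslant N}Y_k\geqslant\delta\}\cap\Sigma\bigr]\leqslant e^{-\delta^2/(2T)}$. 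Applying the same argument to $(-Y_n)_n$ (also a real martingale difference sequence bounded by $S_n$) controls the lower tail, and adding the two bounds proves the real statement.

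Combining the reduction with the real statement gives $\mathbb{P}\bigl[\{|\sum_n Z_n|\geqslant\varepsilon\}\cap\Sigma\bigr]\leqslant 4e^{-\varepsilon^2/(4T)}$, and this probability is of course also $\leqslant1$. It then remains to verify the elementary inequality $\min\bigl(1,\,4e^{-u/4}\bigr)\leqslant 2e^{-u/10}$ for all $u\geqslant0$, applied with $u=\varepsilon^2/T$: if $u\leqslant10\log 2$ the right-hand side is $\geqslant1$; if $u\geqslant10\log 2$ then, since $10>\tfrac{20}{3}$, we have $\tfrac{3u}{20}\geqslant\log 2$, i.e. $4e^{-u/4}\leqslant2e^{-u/10}$. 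This yields exactly $\mathbb{P}\bigl[\{|\sum_n Z_n|\geqslant\varepsilon\}\cap\Sigma\bigr]\leqslant 2e^{-\varepsilon^2/(10T)}$; the somewhat generous constant $10$ is there precisely to absorb the $\sqrt2$ lost in the real/imaginary split and the factors of $2$ lost in the union bounds.

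I expect the main difficulty to be organisational rather than substantive. The decisive point is to put the \emph{predictable} quantity $\sum_{k}S_k^2$ into the exponent of $M_n$: predictability of $(S_n)$ is exactly what allows Hoeffding's lemma to be applied inside $\mathbb{E}[\,\cdot\mid\mathcal{F}_{k-1}]$ and what makes $(M_n)$ a supermartingale. The second thing to get right is to intersect with $\Sigma$ \emph{before} passing to expectations—equivalently, to use the deterministic bound $\sum_{k\leqslant N}S_k^2\leqslant T$, which holds pointwise on $\Sigma$—rather than trying to handle $\sum_n Y_n$ and $\Sigma$ separately; an alternative would be to stop the sum at the first time the predictable process $\sum_k S_k^2$ would exceed $T$, but that brings in the usual overshoot issue, whereas restricting inside the expectation does not. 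Everything else—Hoeffding's lemma, the supermartingale property, Markov, and optimising in $\theta$—is routine.
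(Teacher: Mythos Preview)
Your argument is correct: the reduction to real parts, the exponential supermartingale built from the conditional Hoeffding lemma (using that $(S_n)$ is predictable), the restriction to $\Sigma$ inside the expectation before applying Markov, and the final bookkeeping $\min(1,4e^{-u/4})\leqslant 2e^{-u/10}$ all go through as written. The paper itself gives no proof here and simply cites Lemma~3.13 of \cite{Rachid}; your route is the standard one and is in all likelihood exactly what that reference does, so there is nothing substantive to compare.
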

\begin{proof}[Proof]
See lemma 3.13 in \cite{Rachid}.
\end{proof}

\begin{lemma}\label{multiplicative_chaos}
    Let $R \geqslant 1$ be a real number and $F_R(z)$ as in \eqref{F_K}. Uniformly for $1/2 \leqslant q \leqslant 1$ and $1\leqslant r \leqslant {\rm e}^{1/R}$, we have
    $$
    \mathbb{E}\bigg[\bigg( \frac{1}{2\pi} \int_{0}^{2\pi} |F_R(r{\rm e}^{i\theta})|^2{\rm d}\theta \bigg)^q\bigg]\ll \bigg(\frac{R}{1+(1-q)\sqrt{\log R}}\bigg)^q.
    $$
\end{lemma}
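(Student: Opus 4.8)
The plan is to reduce the claim to a statement about a Gaussian multiplicative chaos measure and then exploit the well-understood behaviour of its fractional moments. First I would write $\frac{1}{2\pi}\int_0^{2\pi}|F_R(re^{i\theta})|^2\,\mathrm d\theta = \int_0^{2\pi} e^{2\,\mathrm{Re}\,G_R(re^{i\theta})}\,\frac{\mathrm d\theta}{2\pi}$, where $G_R(z)=\sum_{k\le R}\frac{X(k)}{\sqrt k}z^k$ is a Gaussian random holomorphic polynomial. The real part $\mathrm{Re}\,G_R(re^{i\theta})$ is a real Gaussian field in $\theta$ whose covariance, for $r$ close to $1$ (here $1\le r\le e^{1/R}$, so $r^{2k}\asymp 1$ uniformly over $k\le R$), is $\sum_{k\le R}\frac{r^{2k}}{k}\cos(k(\theta-\theta')) = \tfrac12\log\min(R,\,|\theta-\theta'|^{-1}) + O(1)$. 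Thus $\int_0^{2\pi} e^{2\,\mathrm{Re}\,G_R}\,\frac{\mathrm d\theta}{2\pi}$ is, up to constants, a truncated (scale-$R$) Gaussian multiplicative chaos integral at inverse temperature on the critical boundary, with total mass of expected size $R$.

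The key input is the sharp estimate for low moments of such a chaos: for $1/2\le q\le 1$ one expects $\mathbb E\big[(\text{mass})^q\big]\asymp R^q\big(1+(1-q)\sqrt{\log R}\big)^{-q}$, the extra saving over the trivial bound $\big(\mathbb E[\text{mass}]\big)^q = R^q$ coming precisely from the fact that the mass is typically smaller than its mean by a factor $\sqrt{\log R}$ — this is the Harper-type "better than square-root cancellation" phenomenon. I would carry this out by the standard approach: split dyadically according to the size of the mass, or equivalently use the representation via a branching-random-walk / martingale decomposition of $G_R$ in scales $e^{-j}$, $0\le j\le \log R$. Writing $\mathrm{Re}\,G_R = \sum_j \Delta_j$ as a sum of independent increments, condition on all but the last scale and use Girsanov/change of measure to see that the mass concentrates on the event that the partial sums stay below a barrier; summing the contributions gives the $(1+(1-q)\sqrt{\log R})^{-q}$ factor. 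Alternatively one can cite the general first-moment result of Harper for GMC on the critical line and transfer it to $F_R$ via the comparison of covariances above, which keeps the argument short.

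The main obstacle is making the comparison between $F_R$ and an exactly scale-invariant GMC sufficiently precise that the $\sqrt{\log R}$-scale gain survives: the trivial bound $\mathbb E[(\cdots)^q]\le (\mathbb E[\cdots])^q = R^q$ follows instantly from Jensen, so all the work is in extracting the denominator, and that requires controlling the left tail of $\log(\text{mass})$ down to deviations of order $\sqrt{\log R}$. I would handle the boundary cases $q$ near $1/2$ (where the denominator is of order $\sqrt{\log R}$ and the bound is most delicate) and $q$ near $1$ (where it degenerates to the trivial bound and little is needed) separately, and take care that all implied constants are uniform in $q\in[1/2,1]$ and $r\in[1,e^{1/R}]$, the latter being easy since $r^{2k}=1+O(k/R)=1+O(1)$ throughout the relevant range so the covariance is perturbed by only $O(1)$.
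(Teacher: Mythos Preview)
The paper does not actually prove this lemma: its entire proof is the single sentence ``See proposition 3.2 in \cite{model_zaman}.'' Your proposal, by contrast, sketches the argument that lies behind that cited result. You correctly identify $\frac{1}{2\pi}\int_0^{2\pi}|F_R|^2\,\mathrm d\theta$ as the total mass of a truncated Gaussian multiplicative chaos at the critical normalisation, and you correctly locate the $(1+(1-q)\sqrt{\log R})^{-q}$ saving over the trivial Jensen bound $R^q$ in Harper's barrier/left-tail analysis for the low moments of such masses. This is precisely the strategy Soundararajan and Zaman carry out in the reference the paper cites (their argument is itself an adaptation of Harper's work on random multiplicative functions to this cleaner Gaussian covariance), so there is no substantive discrepancy between your plan and the paper --- you are simply supplying what the paper outsources.

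One small inaccuracy worth fixing if you write this up: the covariance of $\mathrm{Re}\,G_R(re^{i\theta})$ is $\tfrac12\sum_{k\le R}\frac{r^{2k}}{k}\cos(k(\theta-\theta'))$ (the factor $\tfrac12$ comes from $\mathrm{Var}\,\mathrm{Re}\,X(k)=\tfrac12$), and the harmonic sum itself is $\log\min(R,|\theta-\theta'|^{-1})+O(1)$ rather than half of that. The two slips cancel, so your conclusion that $2\,\mathrm{Re}\,G_R$ sits at the critical scaling is correct, but the intermediate line as written is off by constant factors on each side.
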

\begin{proof}[Proof]
    See proposition 3.2 in \cite{model_zaman}.
\end{proof}
\section{Reduction of the problem}\label{section_reduction_chapter_2}
The goal of this section is to reduce the problem to something simple to deal with. We want to prove that the event
$$ \mathcal{A}:=\big\{ |A(n)|> 4(\log n )^{3/4 + \varepsilon}     \text{, for infinitely many }n\big\}$$
holds with null probability. We adopt the reasoning from \cite{Rachid} and set $X_{\ell}$ to be equal to $2^{\ell^K}$, where $K=\frac{25}{\varepsilon}$. It suffices to prove that the event 
$$ \mathcal{B}:=\bigg\{ \sup_{X_{\ell  -1}<n\leqslant X_{\ell}}\frac{|A(n)|}{(\log n )^{3/4 + \varepsilon}}> 4    \text{, for infinitely many }\ell\bigg\}$$ 
holds with null probability. We set
$$
\mathcal{B}_{\ell} := \bigg\{ \sup_{X_{\ell  -1}<n\leqslant X_{\ell}}\frac{|A(n)|}{(\log n )^{3/4 + \varepsilon}}> 4  \bigg\}.
$$
In order to prove Theorem \ref{theoreme_principal} using Borel-Cantelli's First Lemma~\ref{borelcantelli}, it is enough to establish the convergence of the series $ \sum_{\ell \geqslant 1} \mathbb{P}\big[\mathcal{B}_{\ell}\big]$.

Arguing as Lau--Tenenbaum--Wu in \cite{Tenenbaum} in the proof of lemma 3.1 and recently in \cite{Rachid} at the beginning of Section 5.1, we consider for every $j\geqslant 0 $
$$
y_j=\bigg\lfloor\frac{2^{\ell^K}{\rm e}^{j/\ell}}{2^{K\ell^{K-1}}}\bigg\rfloor \text{ and } \widetilde{y}_j:= \frac{2^{\ell^K}{\rm e}^{j/\ell}}{2^{K\ell^{K-1}}}.
$$
Let $J$ be minimal under the constraint $y_J \geqslant   X_{\ell}$ which means 
\begin{equation}\label{definition_J}
    J_{\ell}=J:=\lceil K \ell^{K} \log 2 \rceil \ll ~\ell^{K}.
\end{equation}
Note that  $\ell^{K} =\frac{1}{\log 2} \log X_{\ell} \asymp  \log n \asymp \log y_j $ for any $n \in ]X_{\ell -1},X_{\ell}]$ and~$~1\leqslant~  j\leqslant ~J$. \\
Let $n \in ]X_{\ell -1},X_{\ell}]$, we start by splitting $A(n)$ according to the size of $\lambda_1$ and $m_{\lambda_1}(\lambda)$, we divide $A(n)$ to 
$$
A(n) = A_0(n)+ A_1(n) +A_2(n)+ A_3(n)
$$
where
$$
 A_0(n):= \sum_{\substack{|\lambda|=n \\ \lambda_1 \leqslant y_0}} a(\lambda), \qquad\qquad 
A_1(n) :=\sum_{\substack{|\lambda|=n \\ \lambda_1 > y_0 \\ m_{\lambda_1}(\lambda)=1}} a(\lambda),
$$
$$
A_2(n):= \sum_{\substack{|\lambda|=n \\ \lambda_1 > y_0\\ m_{\lambda_1}(\lambda)= 2}} a(\lambda)
\qquad \text{and}\qquad 
A_3(n):= \sum_{\substack{|\lambda|=n \\ \lambda_1 > y_0\\ m_{\lambda_1}(\lambda)\geqslant 3}} a(\lambda).
$$
Now we set, for each $r\in \{0,1,2,3\}$
$$
\mathcal{B}_{\ell}^{(r)}:=  \bigg\{ \sup_{X_{\ell  -1}<n\leqslant X_{\ell}}\frac{|A_r(n)|}{(\log n )^{3/4 + \varepsilon}}> 1  \bigg\}.
$$
Note that $$\mathcal{B}_{\ell} \subset \bigcup_{r=0}^3\mathcal{B}_{\ell}^{(r)},$$ thus, to prove $ \sum_{\ell \geqslant 1} \mathbb{P}\big[\mathcal{B}_{\ell}\big]$ converges, it suffices to prove $ \sum_{\ell \geqslant 1} \mathbb{P}\big[\mathcal{B}_{\ell}^{(r)}\big]$ converges, for all $r\in\{0,1,2,3\}$.
\section{Convergence of \texorpdfstring{$\sum_{\ell \geqslant 1} \mathbb{P}[\mathcal{B}_{\ell}^{(0)}] \text{ and }$} \texorpdfstring{$\sum_{\ell \geqslant 1} \mathbb{P}[\mathcal{B}_{\ell}^{(3)}]$}.}
\noindent Let's start first by dealing with  $\mathcal{B}_{\ell}^{(0)}$.
\begin{lemma}\label{lemma_pb0}
The sum $ \sum_{\ell \geqslant 1} \mathbb{P}[\mathcal{B}_{\ell}^{(0)}]$ converges.
\end{lemma}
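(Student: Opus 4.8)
The plan is to exploit that $A_0(n)$ is built only from the "short" data $X(1),\dots,X(y_0)$, and to extract it by a Cauchy integral on a circle of radius slightly larger than $1$: the factor $z^{-n-1}$ then produces an enormous gain, because every $n$ in the window $]X_{\ell-1},X_\ell]$ is vastly larger than $y_0$. Concretely, restricting the formal series $\sum_\lambda a(\lambda)z^{|\lambda|}$ to partitions all of whose parts are $\le y_0$ factors as $\prod_{k\le y_0}\exp\!\big(\tfrac{X(k)}{\sqrt{k}}z^k\big)=F_{y_0}(z)$, so $A_0(n)=[z^n]F_{y_0}(z)$ for every $n\ge 0$; since $F_{y_0}$ is entire, Cauchy's theorem gives, for any $r>0$,
$$A_0(n)=\frac{r^{-n}}{2\pi}\int_0^{2\pi}F_{y_0}\!\big(r{\rm e}^{i\theta}\big){\rm e}^{-in\theta}\,{\rm d}\theta,\qquad\text{hence}\qquad |A_0(n)|\le r^{-n}\cdot\frac{1}{2\pi}\int_0^{2\pi}\big|F_{y_0}(r{\rm e}^{i\theta})\big|\,{\rm d}\theta .$$
I would take $r={\rm e}^{1/y_0}$, the largest radius allowed in Lemma~\ref{multiplicative_chaos} with $R=y_0$.

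The decisive step is to quantify $r^{-n}$. Every $n$ with $X_{\ell-1}<n\le X_\ell$ satisfies $n>X_{\ell-1}>\widetilde{y}_0\ge y_0$, where $X_{\ell-1}>\widetilde{y}_0$ is exactly the strict Bernoulli inequality $(1-1/\ell)^K>1-K/\ell$; hence, as $r>1$,
$$\sup_{X_{\ell-1}<n\le X_\ell}|A_0(n)|\ \le\ {\rm e}^{-X_{\ell-1}/y_0}\cdot\frac{1}{2\pi}\int_0^{2\pi}\big|F_{y_0}(r{\rm e}^{i\theta})\big|\,{\rm d}\theta .$$
Moreover $X_{\ell-1}/y_0\ge X_{\ell-1}/\widetilde{y}_0=2^{\delta_\ell}$ with $\delta_\ell:=(\ell-1)^K-\ell^K+K\ell^{K-1}>0$; a short Taylor expansion of $(1-1/\ell)^K$ shows $\delta_\ell\gg_K\ell^{K-2}$, and since we may assume $0<\varepsilon\le 1$ (so $K=25/\varepsilon\ge 25$), the exponent $2^{\delta_\ell}$ grows faster than any fixed power of $\ell$; in particular $2^{\delta_\ell}\gg\ell^K$ for all large $\ell$.

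To finish I would estimate the probability. Since $\inf_{X_{\ell-1}<n\le X_\ell}(\log n)^{3/4+\varepsilon}\ge(\log X_{\ell-1})^{3/4+\varepsilon}$, the previous display gives
$$\mathbb{P}\big[\mathcal{B}_\ell^{(0)}\big]\ \le\ \mathbb{P}\bigg[\frac{1}{2\pi}\int_0^{2\pi}\big|F_{y_0}(r{\rm e}^{i\theta})\big|\,{\rm d}\theta\ >\ {\rm e}^{2^{\delta_\ell}}(\log X_{\ell-1})^{3/4+\varepsilon}\bigg].$$
Now apply Markov's inequality to the square, then Cauchy--Schwarz (replacing the $L^1$ average of $|F_{y_0}|$ by its $L^2$ average), then Lemma~\ref{multiplicative_chaos} with $q=1$, $R=y_0$, $r={\rm e}^{1/y_0}$, for which the bound reads $\mathbb{E}\big[\tfrac{1}{2\pi}\int_0^{2\pi}|F_{y_0}(r{\rm e}^{i\theta})|^2\,{\rm d}\theta\big]\ll y_0\le X_\ell=2^{\ell^K}$. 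This yields
$$\mathbb{P}\big[\mathcal{B}_\ell^{(0)}\big]\ \ll\ {\rm e}^{-2^{\delta_\ell+1}}\,y_0\ \ll\ \exp\!\big(\ell^K\log 2-2^{\delta_\ell+1}\big),$$
and since $2^{\delta_\ell}$ eventually dominates $\ell^K$ the right-hand side is summable over $\ell$, which is the claim of the lemma.

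The calculation is short; the only point needing care is the arithmetic comparison $X_{\ell-1}/y_0=2^{\delta_\ell}\gg\log X_\ell$ — i.e.\ making sure that the super-exponential (in $\ell$) gain coming from $r^{-n}$ genuinely beats the second-moment growth $\ll y_0\le X_\ell$ of the truncated chaos. This is precisely where the largeness of the exponent $K=25/\varepsilon$ is exploited; everything else — the Cauchy integral, Cauchy--Schwarz, Lemma~\ref{multiplicative_chaos} at $q=1$, and Borel--Cantelli~\ref{borelcantelli} to deduce the relevant part of Theorem~\ref{theoreme_principal} — is routine.
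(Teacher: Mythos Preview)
Your proof is correct and rests on the same decisive estimate as the paper's: take radius $r=e^{1/y_0}$ so that the factor $r^{-n}\le e^{-X_{\ell-1}/y_0}=e^{-2^{\delta_\ell}}$, with $\delta_\ell\gg_K\ell^{K-2}$, overwhelms all polynomial-in-$y_0$ losses. The packaging differs slightly. The paper computes $\mathbb{E}\big[|A_0(n)|^2\big]$ directly---it is the coefficient of $z^n$ in the deterministic series $\exp\!\big(\sum_{k\le y_0}z^k/k\big)$, hence bounded by $r^{-n}\exp\!\big(\sum_{k\le y_0}r^k/k\big)\ll y_0^{2e}\,e^{-X_{\ell-1}/y_0}$---and then union-bounds over the $O(X_\ell)$ values of $n$ in $]X_{\ell-1},X_\ell]$. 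You instead bound $\sup_n|A_0(n)|$ in one stroke via the Cauchy integral, and replace the elementary generating-function calculation by an appeal to Lemma~\ref{multiplicative_chaos} at $q=1$. Your route sidesteps the large union bound; the paper's route is self-contained and does not need to invoke Lemma~\ref{multiplicative_chaos} here. Both arrive at a bound of the shape $\exp\!\big(O(\ell^K)-2^{c\,\ell^{K-2}}\big)$, which is summable.
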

\begin{proof}[Proof]
We have 
\begin{equation}\label{equation A_0}
   \begin{aligned}
\mathbb{E}\big[ |A_0(n)|^2\big]  = \sum_{\substack{ |\lambda|=n \\ \lambda_1 \leqslant y_0}} \mathbb{E}\big[|a(\lambda)|^2\big]
 =  \sum_{\substack{ |\lambda|=n \\ \lambda_1 \leqslant y_0}}\prod_k \frac{1}{k^{m_k}m_k!}.
\end{aligned} 
\end{equation}
Arguing as Soundararajan and Zaman in \cite{model_zaman}, the right side of \eqref{equation A_0} is the coefficient of $z^n$ in the generating function $\exp\big(\sum_{k\leqslant y_0}z^k/k \big)$. Since the coefficients of this generating function are all non-negative, for any $r > 0$ we conclude that 
\begin{equation}\label{equaltion_with_r}
    \mathbb{E}\big[ |A_0(n)|^2\big]  = \sum_{\substack{ |\lambda|=n \\ \lambda_0 \leqslant y_0}} \prod_k \frac{1}{k^{m_k}m_k!} \leqslant \frac{1}{r^n}\exp\bigg(\sum_{k\leqslant y_0}\frac{r^{k}}{k} \bigg).
\end{equation}
By choosing $r=\exp(1/y_0)$ and since $X_{\ell-1}<n\leqslant X_{\ell}$, we get
\begin{align*}
    \mathbb{E}\big[ |A_0(n)|^2\big]  & \leqslant \frac{\exp\bigg(\sum_{k \leqslant y_0} \frac{{\rm e}^{k/y_0}}{k}\bigg)}{{\rm e}^{n/y_0}}
    \\ & \leqslant \frac{\exp\bigg(\sum_{k \leqslant y_0} \frac{{\rm e}}{k}\bigg)}{\exp(X_{\ell-1}/y_0)}
    \\ & \leqslant \frac{\exp\big(2{\rm e} \log y_0\big)}{\exp(X_{\ell-1}/y_0)}
    \\ & \leqslant \frac{y_0^{2{\rm e}}}{\exp(X_{\ell-1}/y_0)} \leqslant \frac{2^{2{\rm e}\ell^K}}{\exp\big(2^{c\, \ell^{K-2}}\big)}
\end{align*}
where $c$ is an absolute constant. Thus, by Markov's inequality
$$
\mathbb{P}[\mathcal{B}_{\ell}^{(0)}]\leqslant \sum_{X_{\ell-1}<n\leqslant X_{\ell}} \frac{1}{(\log n)^{1/2+2\varepsilon}} \mathbb{E}\big[ |A_0(n)|^2\big] \leqslant \frac{2^{(2{\rm e}+1)\ell^K}}{\exp\big(2^{c\,\ell^{K-2}}\big)}.
$$
It follows that the sum $ \sum_{\ell \geqslant 1} \mathbb{P}[\mathcal{B}_{\ell}^{(0)}]$ converges.
\end{proof}
\begin{lemma}
    The sum $ \sum_{\ell \geqslant 1} \mathbb{P}[\mathcal{B}_{\ell}^{(3)}]$ converges.
\end{lemma}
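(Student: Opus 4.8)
The plan is to estimate $\mathbb{E}\big[|A_3(n)|^2\big]$ by a second moment computation in the spirit of Lemma~\ref{lemma_pb0}, and then to conclude with Markov's inequality together with a union bound over $n\in\,]X_{\ell-1},X_\ell]$. By orthogonality of the $a(\lambda)$,
$$
\mathbb{E}\big[|A_3(n)|^2\big]=\sum_{\substack{|\lambda|=n\\ \lambda_1>y_0\\ m_{\lambda_1}(\lambda)\geqslant 3}}\prod_k\frac{1}{k^{m_k}m_k!}.
$$
Every partition contributing here is built from $m\geqslant 3$ copies of its largest part $j:=\lambda_1>y_0$ together with a partition $\mu$ of $n-jm$ all of whose parts are $<j$, and the associated weight factors as $\tfrac{1}{j^m m!}\prod_{k<j}\tfrac{1}{k^{m_k(\mu)}m_k(\mu)!}$.

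The key step is to drop the restriction that the parts of $\mu$ are smaller than $j$: since all weights are non-negative, extending the inner sum to all partitions $\mu$ of $n-jm$ only increases it, and $\sum_{|\mu|=N}\prod_k \tfrac{1}{k^{m_k(\mu)}m_k(\mu)!}=1$ for every $N\geqslant 0$ — precisely the identity already recorded in Section~2.2. Hence
$$
\mathbb{E}\big[|A_3(n)|^2\big]\leqslant \sum_{j>y_0}\ \sum_{\substack{m\geqslant 3\\ jm\leqslant n}}\frac{1}{j^m m!}\leqslant \sum_{j>y_0}\frac{1}{j^3}\sum_{m\geqslant 3}\frac{1}{m!}\ll \sum_{j>y_0}\frac{1}{j^3}\ll \frac{1}{y_0^{2}}.
$$
By Markov's inequality applied to each $n$ and a union bound over the (at most $X_\ell$) integers in $]X_{\ell-1},X_\ell]$,
$$
\mathbb{P}\big[\mathcal{B}_\ell^{(3)}\big]\leqslant \sum_{X_{\ell-1}<n\leqslant X_\ell}\frac{\mathbb{E}\big[|A_3(n)|^2\big]}{(\log n)^{3/2+2\varepsilon}}\ll \frac{X_\ell}{y_0^{2}}.
$$

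Finally I would insert the explicit sizes $X_\ell=2^{\ell^K}$ and $y_0\geqslant \tfrac12\widetilde y_0=\tfrac12\,2^{\ell^K-K\ell^{K-1}}$, valid for all large $\ell$, which gives $X_\ell/y_0^{2}\ll 2^{-\ell^K+2K\ell^{K-1}}$. Since $\ell^K$ dominates $2K\ell^{K-1}$, the exponent is below $-\tfrac12\ell^K$ for all large $\ell$, so $\sum_{\ell\geqslant 1}\mathbb{P}[\mathcal{B}_\ell^{(3)}]$ converges (the finitely many small $\ell$ contribute a finite amount, each probability being at most $1$). In contrast with the pieces $\mathcal{B}_\ell^{(1)}$ and $\mathcal{B}_\ell^{(2)}$ to follow, this case needs neither the martingale concentration of Lemma~\ref{updated_hoeffding} nor the multiplicative chaos bound of Lemma~\ref{multiplicative_chaos}: imposing $m_{\lambda_1}(\lambda)\geqslant 3$ on top of $\lambda_1>y_0$ already forces a second moment of size $y_0^{-2}$, which is far smaller than what a crude union bound can absorb. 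The only mild subtlety is the combinatorial bookkeeping in the partition decomposition and the appeal to non-negativity when passing to the unrestricted inner sum; everything else is routine.
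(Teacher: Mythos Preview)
Your proof is correct and follows essentially the same approach as the paper: bound $\mathbb{E}[|A_3(n)|^2]\ll y_0^{-2}$ via the orthogonality of the $a(\lambda)$ together with the identity $\sum_{|\mu|=N}\prod_k 1/(k^{m_k}m_k!)=1$, then finish with Markov's inequality and a union bound over $n\in\,]X_{\ell-1},X_\ell]$. The only cosmetic difference is the combinatorial bookkeeping --- you sum over the exact multiplicity $m\geqslant 3$ of the largest part, whereas the paper peels off exactly three copies of it and absorbs any remaining copies into the residual partition with parts $\leqslant k$; both routes give the same $\sum_{j>y_0}j^{-3}\ll y_0^{-2}$.
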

\begin{proof}[Proof]
We have
\begin{align*}
     \mathbb{E}\big[ |A_3(n)|^2\big] & = \sum_{\substack{|\lambda|=n \\ m_{\lambda_1}(\lambda) \geqslant 3 \\ \lambda_1 >y_0} } \mathbb{E}\big[ |a(\lambda)|^2\big]
     \\ & \leqslant \sum_{ y_0 < k\leqslant n/3} \frac{1}{k^3} \sum_{\substack{|\lambda|=n-3k \\ \lambda_1 \leqslant k} }\mathbb{E}\big[ |a(\lambda)|^2\big].
\end{align*}
Since, for every $k,n\geqslant 1$
$$
\sum_{\substack{|\lambda|=n-3k \\ \lambda_1 \leqslant k} }\mathbb{E}\big[ |a(\lambda)|^2\big] \leqslant \sum_{\substack{|\lambda|=n-3k } }\mathbb{E}\big[ |a(\lambda)|^2\big] \leqslant 1 ,
$$
we get
$$
\mathbb{E}\big[ |A_3(n)|^2\big] \leqslant \sum_{ y_0 < k\leqslant n/3} \frac{1}{k^3} \ll \frac{1}{y_0^2}.
$$
Thus,
$$
\mathbb{P}[\mathcal{B}_{\ell}^{(3)}] \leqslant \sum_{X_{\ell-1} <n \leqslant X_{\ell}} \frac{1}{(\log n)^{1/2+2\varepsilon}}\mathbb{E}\big[ |A_3(n)|^2\big] \ll 2^{\ell^K} \frac{2^{2K\ell^{K-1}}}{2^{2\ell^K}} = \frac{2^{2K\ell^{K-1}}}{2^{\ell^K}}.
$$
It follows that the sum $ \sum_{\ell \geqslant 1} \mathbb{P}[\mathcal{B}_{\ell}^{(3)}]$ converges.
\end{proof}
\section{Upper bound of \texorpdfstring{$\mathbb{P}[\mathcal{B}^{(1)}_{\ell}]$}.}\label{section_B_1}
In this subsection, we give a bound of  $ \mathbb{P}[\mathcal{B}_{\ell}^{(1)}]$. We consider the filtration $\big\{ \mathcal{F}_k \big\}_{k\geqslant 1}$, where $\mathcal{F}_k$ is the $\sigma$-algebra generated by $\{X(1),X(2),...,X(k-1)\}$. We set the convention $a(\lambda)=0$ for every $|\lambda|<0$. We have 
$$
A_1(n)= \sum_{\substack{|\lambda|=n \\ \lambda_1 > y_0 \\ m_{\lambda_1}(\lambda)=1}} a(\lambda) = \sum_{ y_0 <k \leqslant n} \frac{X(k)}{\sqrt{k}} \sum_{\substack{|\lambda|=n-k \\ \lambda_1 <  k }} a(\lambda).
$$
Note that $\sum_{\substack{|\lambda|=n-k \\ \lambda_1 <  k }} a(\lambda)$ is independent from $X(k)$ and depend only of $X(i)$ with $i<k$. Note, as well 
$$
\mathbb{E}\bigg[ \frac{X(k)}{\sqrt{k}} \sum_{\substack{|\lambda|=n-k \\ \lambda_1 <  k }} a(\lambda)\, \bigg| \mathcal{F}_{k} \,\bigg]=\mathbb{E}\bigg[ \frac{X(k)}{\sqrt{k}} \, \bigg| \mathcal{F}_{k} \,\bigg] \sum_{\substack{|\lambda|=n-k \\ \lambda_1 <  k }} a(\lambda) =0.
$$
Thus $A_1(n)$, as defined in section \ref{section_reduction_chapter_2}, is a sum of martingale differences. Recall that $y_j\asymp y_{j-1}$, we set
\begin{equation}\label{V_n}
    V(n) := \sum_{ y_0 <k \leqslant n} \frac{1}{k}\bigg| \sum_{\substack{|\lambda|=n-k \\ \lambda_1 <  k }} a(\lambda) \bigg|^2.
\end{equation}
We define
\begin{equation}\label{V_tilde_n}
    \widetilde{V}(n):= \sum_{\substack{ 1 \leqslant j\leqslant J \\ \frac{n}{y_{j}} >\ell^{100K}}}\sum_{ y_{j-1} <k \leqslant y_j}  \frac{1}{k}\bigg| \sum_{\substack{|\lambda|=n-k \\ \lambda_1 <  k }} a(\lambda) \bigg|^2
\end{equation}
and we set
\begin{equation}\label{V_n_y}
    V(n,y_{j}):= \frac{1}{y_{j}}\sum_{ y_{j-1} <k \leqslant y_j} \bigg| \sum_{\substack{|\lambda|=n-k \\ \lambda_1 <  k }} a(\lambda) \bigg|^2.
\end{equation}
Note that the number of $j$ such that $n\geqslant y_j$ and $\frac{n}{y_{j}} \leqslant \ell^{100K}$ is less than $100K \ell \log \ell+1$. We have then 
$$
\begin{aligned}\label{majoration_de_V}
    V(n) & \leqslant \widetilde{V}(n) + (100K \ell \log \ell+1) \sup_{\substack{ \frac{n}{y_{j}} \leqslant \ell^{100K} \\ n \geqslant y_j}}  V(n,y_{j}) 
    \\ & \leqslant C_0\bigg(  \widetilde{V}(n)+ \ell \log \ell \sup_{\substack{ 1\leqslant j\leqslant J}}  V(n,y_{j}) \bigg)
\end{aligned}
$$
where $C_0$ is a constant depend only on $K$. Let 
\begin{equation}\label{T(ell)_chapter2}
    T(\ell) =\ell^{10} \,\,\,\,\,\text{ and }\,\,\,\,\, T_1(\ell):= \frac{T(\ell)}{\ell \log\ell}.
\end{equation}
We set the events
\begin{equation}\label{event_T}
    \mathcal{T}(\ell)=:\mathcal{T} = \bigg\{ \sup_{ X_{\ell-1}<n \leqslant X_{\ell}}  V(n) \leqslant 2C_0T(\ell)\ell^{K/2} \bigg\}
\end{equation}
and
\begin{equation}\label{event_T_n}
    \mathcal{T}_n=\mathcal{T}_n(\ell) := \bigg\{   V(n) \leqslant 2C_0T(\ell)\ell^{K/2} \bigg\}.
\end{equation}
We define finally the following probabilities
\begin{equation}\label{probability_star}
    \mathbb{P}^{(1)}_{\ell} := \mathbb{P}\bigg[ \sup_{\substack{ X_{\ell-1} < n\leqslant X_{\ell} \\1 \leqslant j\leqslant J }} V(n,y_j)> T_1(\ell)\ell^{K/2}  \bigg]
\end{equation}
and
\begin{equation}\label{probability_tilde}
    \widetilde{\mathbb{P}}^{(1)}_{\ell} := \mathbb{P}\bigg[ \sup_{\substack{ X_{\ell-1} < n\leqslant X_{\ell} }} \widetilde{V}(n) > T(\ell)\ell^{K/2}  \bigg].
\end{equation}
It is  clear that $ \mathbb{P}[\overline{\mathcal{T}}] \leqslant \mathbb{P}^{(1)}_{\ell}  +\widetilde{\mathbb{P}}^{(1)}_{\ell}$, where $ \overline{\mathcal{T}}$ is the complement of $ \mathcal{T}$ in sample space.
\subsection{Bounding $\widetilde{\mathbb{P}}^{(1)}_{\ell} $.}
The objective of this section is to establish the convergence of the summation $\sum_{\ell \geqslant 1} \widetilde{\mathbb{P}}^{(1)}_{\ell}$.
\begin{lemma}\label{Ptilde_conv_tilde}
    The sum  $\sum_{\ell \geqslant 1} \widetilde{\mathbb{P}}^{(1)}_{\ell} $ converges.
\end{lemma}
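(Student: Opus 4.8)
The plan is a first–moment (Markov) bound followed by a union bound over the at most $X_{\ell} = 2^{\ell^{K}}$ integers $n$ in $(X_{\ell-1},X_{\ell}]$, the whole point being that every summand of $\widetilde V(n)$ is a Taylor coefficient $[z^{n-k}]F_{k-1}(z)$ in which the degree $n-k$ is enormous compared with the truncation level $k-1$: the defining constraint $n/y_j>\ell^{100K}$ forces $k\leqslant y_j< n\,\ell^{-100K}$, so these are ``ultra-smooth'' coefficients and are minuscule.

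First I would record the identity $\sum_{|\lambda|=n-k,\ \lambda_1<k}a(\lambda)=[z^{n-k}]F_{k-1}(z)$, and for each $j$ and each $k\in(y_{j-1},y_j]$ apply Parseval on the circle $|z|=r_j:={\rm e}^{1/y_j}$ (legitimate since $F_{k-1}$ is entire), getting the almost sure bound
$$\big|[z^{n-k}]F_{k-1}\big|^{2}\ \leqslant\ r_j^{-2(n-k)}\cdot\frac{1}{2\pi}\int_0^{2\pi}\big|F_{k-1}(r_j{\rm e}^{i\theta})\big|^{2}\,{\rm d}\theta.$$
Since $k\leqslant y_j$ and $n/y_j>\ell^{100K}$, the prefactor satisfies $r_j^{-2(n-k)}={\rm e}^{-2(n-k)/y_j}\leqslant{\rm e}^{-2(\ell^{100K}-1)}$, which is the source of all the savings.

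Taking expectations and applying Lemma \ref{multiplicative_chaos} with $q=1$ and $R=k-1$ (the hypotheses $1\leqslant r_j\leqslant{\rm e}^{1/(k-1)}$ hold because $k-1<y_j$ and $(y_j)_j$ is increasing) gives $\mathbb{E}\big[\frac{1}{2\pi}\int_0^{2\pi}|F_{k-1}(r_j{\rm e}^{i\theta})|^{2}\,{\rm d}\theta\big]\ll k$, hence $\frac1k\,\mathbb{E}\big[|[z^{n-k}]F_{k-1}|^{2}\big]\ll{\rm e}^{-2(\ell^{100K}-1)}$ uniformly in the admissible pairs $(j,k)$. Summing over those pairs — of which there are fewer than $X_{\ell}=2^{\ell^{K}}$, since every admissible $k$ obeys $k<n\,\ell^{-100K}\leqslant X_{\ell}$ — yields $\mathbb{E}[\widetilde V(n)]\ll 2^{\ell^{K}}{\rm e}^{-2(\ell^{100K}-1)}\leqslant{\rm e}^{-\ell^{100K}}$ for all large $\ell$. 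Markov's inequality then gives $\mathbb{P}\big[\widetilde V(n)>T(\ell)\ell^{K/2}\big]=\mathbb{P}\big[\widetilde V(n)>\ell^{10+K/2}\big]\leqslant{\rm e}^{-\ell^{100K}}$, and a union bound over the $\leqslant 2^{\ell^{K}}$ values of $n$ produces $\widetilde{\mathbb{P}}^{(1)}_{\ell}\ll 2^{\ell^{K}}{\rm e}^{-\ell^{100K}}$; since $\ell^{100K}$ dwarfs $\ell^{K}$, the series $\sum_{\ell}\widetilde{\mathbb{P}}^{(1)}_{\ell}$ converges (very rapidly).

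I do not anticipate a genuine obstacle here: the mechanism is simply that $\widetilde V(n)$ keeps only the ranges where $k$ is tiny relative to $n$, so a contour radius barely above $1$ converts ``$n/y_j$ large'' into exponential gains that overwhelm the merely exponential-in-$\ell^{K}$ union bounds. The only points needing a little care are verifying the hypotheses of Lemma \ref{multiplicative_chaos} and justifying the exchange of $\mathbb{E}$ with the angular integral (Tonelli, all integrands nonnegative) — both routine. One could even dispense with Lemma \ref{multiplicative_chaos} altogether by using the exact identity $\mathbb{E}\big[\frac{1}{2\pi}\int_0^{2\pi}|F_{k-1}(r_j{\rm e}^{i\theta})|^{2}\,{\rm d}\theta\big]=\exp\big(\sum_{i\leqslant k-1}r_j^{2i}/i\big)\leqslant k^{O(1)}$, which is already amply sufficient given the size of the prefactor.
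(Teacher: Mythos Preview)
Your proof is correct and follows essentially the same approach as the paper: both bound $\mathbb{E}\big[|\sum_{|\lambda|=n-k,\,\lambda_1<k}a(\lambda)|^{2}\big]$ by $r^{-2(n-k)}\exp\big(\sum_{m<k}r^{2m}/m\big)$ for a radius $r$ slightly above~$1$, then exploit $n/y_j>\ell^{100K}$ to get superexponential decay that swamps the union bounds over $k$, $j$, and~$n$. The only cosmetic differences are that the paper computes the second moment directly via orthogonality of the $a(\lambda)$ (with $r={\rm e}^{1/k}$) rather than via Parseval plus Lemma~\ref{multiplicative_chaos} (with $r={\rm e}^{1/y_j}$); as you yourself note at the end, invoking Lemma~\ref{multiplicative_chaos} at $q=1$ is overkill and the direct identity $\mathbb{E}\big[\frac{1}{2\pi}\int|F_{k-1}|^{2}\big]=\exp\big(\sum_{i<k}r_j^{2i}/i\big)$ already suffices.
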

\begin{proof}[Proof]
    Using the same argument as in the inequality \eqref{equaltion_with_r}, we have, for any $r>0$
    $$
    \mathbb{E}\bigg[ \bigg| \sum_{\substack{|\lambda|=n-k \\ \lambda_1 <  k }} a(\lambda) \bigg|^2 \bigg] \leqslant \frac{1}{r^{n-k}} \exp\bigg(\sum_{m <k} \frac{r^m}{m} \bigg).
    $$
    In particular for $r={\rm e}^{1/k}$, we have
    $$
    \mathbb{E}\bigg[ \bigg| \sum_{\substack{|\lambda|=n-k \\ \lambda_1 <  k }} a(\lambda) \bigg|^2 \bigg] \leqslant \frac{k^6}{\exp\big(\frac{n-k}{k}\big)} =\frac{{\rm e}k^6}{\exp\big(\frac{n}{k}\big)} .
    $$
    By using Markov's inequality and the observation that $T(\ell) \geqslant 1$, we can derive a bound by utilizing the inequality $y_j\leqslant X_{\ell}^2$, we get
    $$
    \begin{aligned}
        \widetilde{\mathbb{P}}^{(1)}_{\ell} &\leqslant \frac{1}{\ell^{k/2}}  \sum_{X_{\ell-1}<n\leqslant X_{\ell}}\sum_{\substack{ 1 \leqslant j\leqslant J \\ \frac{n}{y_{j}} >\ell^{100K}}}\sum_{ y_{j-1} <k \leqslant y_j}  \frac{1}{k}\mathbb{E}\bigg[ \bigg| \sum_{\substack{|\lambda|=n-k \\ \lambda_1 <  k }} a(\lambda) \bigg|^2 \bigg]
        \\ & \leqslant \frac{1}{\ell^{k/2}}  \sum_{X_{\ell-1}<n\leqslant X_{\ell}}\sum_{\substack{ 1 \leqslant j\leqslant J \\ \frac{n}{y_{j}} >\ell^{100K}}}\sum_{ y_{j-1} <k \leqslant y_j} \frac{{\rm e}k^5}{\exp\big(\frac{n}{k}\big)}
        \\ & \leqslant \frac{1}{\ell^{k/2}}  \sum_{X_{\ell-1}<n\leqslant X_{\ell}}\sum_{\substack{ 1 \leqslant j\leqslant J \\ \frac{n}{y_{j}} >\ell^{100K}}}\sum_{ y_{j-1} <k \leqslant y_j} \frac{{\rm e}2^{10\ell^{K}}}{{\rm e}^{\ell^{100K}}}
        \\ & \ll \frac{\ell^{K/2} 2^{13\ell^K}}{{\rm e}^{\ell^{100K}}}.
    \end{aligned}
    $$
    We deduce that the sum  $\sum_{\ell \geqslant 1} \widetilde{\mathbb{P}}^{(1)}_{\ell} $ converges.
\end{proof}
\subsection{Bounding \texorpdfstring{$\mathbb{P}^{(1)}_{\ell}$}.}\label{section_P_1}
The goal of this subsection is to give a bound of  $\mathbb{P}^{(1)}_{\ell}$. To do so, we follow exactly the same steps as Section \ref{low_moment_estimates} in Chapter 1.  We have for all $1\leqslant j\leqslant J$
$$
V(n,y_j) \leqslant U_j:=\frac{1}{{y}_j} \sum_{r=0}^{+\infty}\max_{y_{j-1}<\beta\leqslant y_j} \bigg| \sum_{\substack{|\lambda|=r \\ \lambda_1 \leqslant \beta}} a(\lambda)\bigg|^2
$$
and
$$
V(n,y_0) \leqslant U_j:=\frac{1}{{y}_j} \sum_{r=0}^{+\infty} \bigg| \sum_{\substack{|\lambda|=r \\ \lambda_1 \leqslant y_0}} a(\lambda)\bigg|^2
$$
Let 
$$
I_j:= \frac{1}{2\pi\widetilde{y}_j} \bigg(\frac{\widetilde{y}_{j}}{\widetilde{y}_{0}}\bigg)^{-1/\ell^K}\int_{0}^{2\pi } \big|F_{y_j}({\rm e}^{i\vartheta})\big|^2{\rm d}\vartheta.
$$
Define $\mathcal{S}$ to be the event $\big\{ I_{j} \leqslant   \frac{T_1(\ell)^{1/2}}{\ell ^{K/2}} \text{ for all } 0 \leqslant j\leqslant J \big\}$ and $\mathcal{S}_j:=\big\{ I_{j} \leqslant   \frac{T_1(\ell)^{1/2}}{\ell ^{K/2}}  \big\} $.
Now we have
$$
\begin{aligned}
    \mathbb{P}_{\ell}^{(1)} & \leqslant  \mathbb{P}\bigg[ \bigcup_{0\leqslant j\leqslant J}\bigg\{ 
U_j \geqslant  \frac{T(\ell) \ell^{K/2} }{  \ell \log \ell}   \bigg\} \bigcap \big\{ \mathcal{S}_{j-1}\big\}\bigg] + \mathbb{P}\big[\,\overline{\mathcal{S}}\,\big]
\\ & \leqslant  \sum_{j=0}^{J} \mathbb{P}\bigg[ \bigg\{ 
U_j \geqslant  \frac{T(\ell) \ell^{K/2} }{  \ell \log \ell}   \bigg\} \bigcap \big\{ \mathcal{S}_{j-1}\big\}\bigg] + \mathbb{P}\big[\,\overline{\mathcal{S}}\,\big].
\end{aligned}
$$
Let start by treating
$$
\widetilde{\mathbb{P}}_j:=\mathbb{P}\bigg[ \bigg\{ 
U_j \geqslant  \frac{T(\ell) \ell^{K/2} }{  \ell \log \ell}   \bigg\} \bigcap \big\{ \mathcal{S}_{j-1}\big\}\bigg].
$$
By Markov's inequality, we have
$$
\begin{aligned}
\widetilde{\mathbb{P}}_j & \leqslant \mathbb{P}\bigg[ \bigg\{ 
U_j \geqslant  \frac{T(\ell) \ell^{K/2} }{  \ell \log \ell}   \bigg\}\,  \bigg| \, \big\{ \mathcal{S}_{j-1}\big\}\bigg] 
\\ & \leqslant \frac{\ell \log \ell}{T(\ell) \ell^{K/2}} \mathbb{E}\big[ U_j \, \big|\,\mathcal{S}_{j-1}\big].
\end{aligned}
$$
We will need the following lemmas
\begin{lemma}\label{lemma_surmartingale_chapter_2}
    Let $r\geqslant 1$ be a fixed integer. The sequence $\bigg( \bigg| \sum_{\substack{|\lambda|=r \\ \lambda_1 \leqslant \beta}} a(\lambda)\bigg|\bigg)_{\beta}$ is a submartingale with respect to the filtration $\big(\mathcal{F}_{\beta}\big)_{\beta}$, where $\mathcal{F}_{\beta} $ is the $\sigma$-algebra generated by $\big\{ X(k); k\leqslant \beta \big\}$.
\end{lemma}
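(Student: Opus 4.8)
Write $S_\beta:=\sum_{|\lambda|=r,\ \lambda_1\le\beta}a(\lambda)$, so the assertion is that $(|S_\beta|)_{\beta\ge1}$ is a submartingale for $(\mathcal F_\beta)_\beta$. The plan is to prove the sharper fact that $(S_\beta)_{\beta\ge1}$ is itself a $\mathbb C$-valued martingale (that is, its real and imaginary parts are martingales), and then to deduce the claim from the conditional Jensen inequality applied to the convex map $z\mapsto|z|$ on $\mathbb C\cong\mathbb R^2$. The two bookkeeping hypotheses are immediate: if $\lambda_1\le\beta$ then $m_k(\lambda)=0$ for $k>\beta$, so by~\eqref{a(lambda)_def} each $a(\lambda)$ is a polynomial in $X(1),\dots,X(\beta)$ and hence $\mathcal F_\beta$-measurable, and so is the finite sum $S_\beta$; and since there are only finitely many partitions of the fixed integer $r$ and the $X(k)$ have finite moments of all orders, $\mathbb E[|S_\beta|]\le\sum_{|\lambda|=r}\mathbb E[|a(\lambda)|]<\infty$.

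For the martingale identity $\mathbb E[S_{\beta+1}\mid\mathcal F_\beta]=S_\beta$ I would write $S_{\beta+1}-S_\beta=\sum_{|\lambda|=r,\ \lambda_1=\beta+1}a(\lambda)$, the sum running over partitions of $r$ whose largest part equals $\beta+1$. For such a $\lambda$, setting $m:=m_{\beta+1}(\lambda)\ge1$ and deleting the $m$ copies of $\beta+1$ yields a partition $\mu$ with $|\mu|=r-m(\beta+1)$ and $\mu_1\le\beta$, and~\eqref{a(lambda)_def} gives $a(\lambda)=\tfrac1{m!}\bigl(X(\beta+1)/\sqrt{\beta+1}\bigr)^{m}a(\mu)$; conversely every such $\mu$ arises this way. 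Hence
\[
S_{\beta+1}-S_\beta=\sum_{m\ge1}\frac{1}{m!}\Bigl(\frac{X(\beta+1)}{\sqrt{\beta+1}}\Bigr)^{m}\Bigl(\,\sum_{\substack{|\mu|=r-m(\beta+1)\\ \mu_1\le\beta}}a(\mu)\Bigr),
\]
a finite sum in which every inner bracket is $\mathcal F_\beta$-measurable while $X(\beta+1)$ is independent of $\mathcal F_\beta$. Taking $\mathbb E[\,\cdot\mid\mathcal F_\beta]$ and using that a standard complex Gaussian $Z$ satisfies $\mathbb E[Z^m]=0$ for every $m\ge1$ (from $\mathbb E[Z^n\overline Z^{m}]=0$ when $n\ne m$), each term drops out, so $\mathbb E[S_{\beta+1}-S_\beta\mid\mathcal F_\beta]=0$.

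It remains to conclude: since $(S_\beta)_\beta$ is an $\mathcal F_\beta$-martingale and $z\mapsto|z|$ is convex, conditional Jensen yields $\mathbb E[|S_{\beta+1}|\mid\mathcal F_\beta]\ge\bigl|\mathbb E[S_{\beta+1}\mid\mathcal F_\beta]\bigr|=|S_\beta|$, which together with the measurability and integrability noted above is exactly the submartingale property. I do not anticipate any genuine difficulty here: the only points needing care are the combinatorial description of the partitions contributing to $S_{\beta+1}-S_\beta$ and the resulting factorisation of $a(\lambda)$, together with the appeal to the vanishing of the pure moments $\mathbb E[X(\beta+1)^m]$; integrability is free because $r$ is fixed, so there is no real obstacle to speak of.
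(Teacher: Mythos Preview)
Your proof is correct and follows essentially the same approach as the paper's: both decompose $S_{\beta+1}$ according to the multiplicity $m$ of the part $\beta+1$, use the vanishing moments $\mathbb E[X(\beta+1)^m]=0$ for $m\ge1$, and then apply Jensen's inequality for the convex map $z\mapsto|z|$. The only cosmetic difference is that you first isolate the martingale identity $\mathbb E[S_{\beta+1}\mid\mathcal F_\beta]=S_\beta$ before invoking Jensen, whereas the paper merges these two steps into a single chain of inequalities (and in fact omits the $1/m!$ factor that you correctly include, though this does not affect the argument).
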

\begin{proof}[Proof]
    Let $r\geqslant 1$ and $\beta \geqslant 0$. By using the fact that $\int z^n {\rm d}\mathbb{P}=0$ for all $n\geqslant 1$ (which is equivalent to say $\mathbb{E}[(X(\beta+1))^n]=0$), we have 
    $$
    \begin{aligned}
        \mathbb{E}\bigg[\bigg| \sum_{\substack{|\lambda|=r \\ \lambda_1 \leqslant \beta+1}} a(\lambda)\bigg|  \, \big| \, \mathcal{F}_{\beta}\bigg] & = \int   \bigg|\sum_{n\geqslant 0} \bigg(\frac{z}{\sqrt{\beta+1}}\bigg)^n \sum_{\substack{|\lambda|=r-n(\beta+1) \\ \lambda_1 \leqslant \beta}} a(\lambda)\bigg|  {\rm d}\mathbb{P}
        \\ &  \geqslant \bigg| \int   \sum_{n\geqslant 0} \bigg(\frac{z}{\sqrt{\beta+1}}\bigg)^n \sum_{\substack{|\lambda|=r-n(\beta+1) \\ \lambda_1 \leqslant \beta}} a(\lambda)  {\rm d}\mathbb{P}\bigg|
        \\ & = \bigg| \sum_{\substack{|\lambda|=r \\ \lambda_1 \leqslant \beta}} a(\lambda)\bigg|.
    \end{aligned}
    $$
    This ends the proof.
\end{proof}
\noindent We have the analogue of Lemma \ref{P(S)} in Chapter 1.
\begin{lemma}\label{lemma_I_j_sousmartingale_chap2}
For $\ell$ large enough, the sequence $(I_j)_{j\geqslant 0}$ is supermartingale with respect to the filtration $(\mathcal{F}_{y_j})_{j\geqslant 0}$.
\end{lemma}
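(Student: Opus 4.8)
The plan is to exploit the factorisation $F_{y_j}(z)=F_{y_{j-1}}(z)\exp\!\big(\sum_{y_{j-1}<k\leqslant y_j}\frac{X(k)}{\sqrt k}z^k\big)$ together with the independence of the block $\{X(k)\colon y_{j-1}<k\leqslant y_j\}$ from $\mathcal F_{y_{j-1}}$, and then to reduce the supermartingale inequality to an elementary harmonic–sum estimate. First I would dispatch the two bookkeeping points: $I_j$ is $\mathcal F_{y_j}$-measurable because $F_{y_j}$ depends only on $X(1),\dots,X(y_j)$; and $I_j\geqslant 0$ with $\mathbb E[I_j]<+\infty$, since by Fubini $\mathbb E\big[|F_{y_j}(e^{i\vartheta})|^2\big]=\exp\!\big(\sum_{k\leqslant y_j}1/k\big)\ll y_j$, hence $\mathbb E[I_j]\ll 1$.

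The heart of the argument is the conditional expectation $\mathbb E[I_j\mid\mathcal F_{y_{j-1}}]$. Writing $|F_{y_j}(e^{i\vartheta})|^2=|F_{y_{j-1}}(e^{i\vartheta})|^2\exp\!\big(2\,\mathrm{Re}\sum_{y_{j-1}<k\leqslant y_j}\frac{X(k)}{\sqrt k}e^{ik\vartheta}\big)$ and observing that, conditionally on $\mathcal F_{y_{j-1}}$, the factor $|F_{y_{j-1}}(e^{i\vartheta})|^2$ is fixed while the exponent is a centred real Gaussian with variance $\sum_{y_{j-1}<k\leqslant y_j}\frac1{2k}$ (independent of $\vartheta$), the Gaussian Laplace transform yields
\[
\mathbb E\Big[\exp\!\Big(2\,\mathrm{Re}\sum_{y_{j-1}<k\leqslant y_j}\frac{X(k)}{\sqrt k}e^{ik\vartheta}\Big)\;\Big|\;\mathcal F_{y_{j-1}}\Big]=\exp\!\Big(\sum_{y_{j-1}<k\leqslant y_j}\frac1k\Big).
\]
Pulling this constant out of the $\vartheta$-integral, comparing with the definition of $I_{j-1}$, and using $\widetilde y_j/\widetilde y_{j-1}=e^{1/\ell}$, one obtains
\[
\mathbb E[I_j\mid\mathcal F_{y_{j-1}}]=\exp\!\Big(\sum_{y_{j-1}<k\leqslant y_j}\frac1k-\frac1\ell-\frac1{\ell^{K+1}}\Big)\,I_{j-1}.
\]

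Thus the supermartingale property is equivalent to $\sum_{y_{j-1}<k\leqslant y_j}\frac1k\leqslant\frac1\ell+\frac1{\ell^{K+1}}$ for all $1\leqslant j\leqslant J$ once $\ell$ is large. I would bound the sum by $\int_{y_{j-1}}^{y_j}\mathrm dt/t=\log(y_j/y_{j-1})$ and use $y_j/y_{j-1}\leqslant e^{1/\ell}\,(1-1/\widetilde y_{j-1})^{-1}$, which comes from $y_j=\lfloor\widetilde y_j\rfloor$; the claim then follows provided $2/\widetilde y_{j-1}\leqslant\ell^{-K-1}$, which holds for $\ell$ large since $\widetilde y_{j-1}\geqslant\widetilde y_0=2^{\ell^K-K\ell^{K-1}}$ is doubly exponentially large.

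The only genuine obstacle is this last piece of bookkeeping: one must check that the harmonic–sum bound still beats the $\ell^{-(K+1)}$ slack created by the extra damping factor $(\widetilde y_j/\widetilde y_0)^{-1/\ell^K}$ even after accounting for the floor functions in $y_j=\lfloor\widetilde y_j\rfloor$. Because the $y_j$ are astronomically large the margin is enormous, so this is routine, and everything else is a direct conditional–expectation computation.
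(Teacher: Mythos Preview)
Your argument is correct and follows essentially the same route as the paper: factor $F_{y_j}=F_{y_{j-1}}\cdot(\text{independent block})$, take the conditional expectation using the Gaussian Laplace transform to obtain $\mathbb{E}[I_j\mid\mathcal F_{y_{j-1}}]=b_jI_{j-1}$ with $b_j=\exp\big(\sum_{y_{j-1}<k\leqslant y_j}1/k-1/\ell-1/\ell^{K+1}\big)$, and then verify $b_j\leqslant 1$ via the harmonic–sum estimate against the slack $\ell^{-(K+1)}$. If anything you are more careful than the paper, which simply writes $\sum_{y_{j-1}<k\leqslant y_j}1/k=1/\ell+O(1/y_0)$ without tracking the floor in $y_j=\lfloor\widetilde y_j\rfloor$; your integral comparison and the bound $y_j/y_{j-1}\leqslant e^{1/\ell}(1-1/\widetilde y_{j-1})^{-1}$ make this explicit. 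One cosmetic slip: the exponent $2\,\mathrm{Re}\sum_{y_{j-1}<k\leqslant y_j}X(k)e^{ik\vartheta}/\sqrt k$ has variance $2\sum 1/k$, not $\sum 1/(2k)$ as you wrote, but your displayed Laplace transform $\exp(\sum 1/k)$ is the correct value and the rest of the argument is unaffected.
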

\begin{proof}[Proof]
    We have
    $$
    \begin{aligned}
        \mathbb{E}\big[I_{j} \, \big| \mathcal{F}_{y_{j-1}}\,\big] & = \frac{1}{2\pi\widetilde{y}_j} \bigg(\frac{\widetilde{y}_{j}}{\widetilde{y}_{0}}\bigg)^{-1/\ell^K}\int_{0}^{2\pi } \mathbb{E}\bigg[ \big|F_{y_j}({\rm e}^{i\vartheta})\big|^2\, \big| \, \mathcal{F}_{y_j}\bigg]{\rm d}\vartheta 
        \\ & = \frac{1}{2\pi\widetilde{y}_j} \bigg(\frac{\widetilde{y}_{j}}{\widetilde{y}_{0}}\bigg)^{-1/\ell^K}\int_{0}^{2\pi } \mathbb{E}\bigg[ \bigg| \exp\bigg( \sum_{y_{j-1}<k\leqslant y_j}\frac{X(k)}{\sqrt{k}} {\rm e}^{ik\vartheta}\bigg)\bigg|^2\bigg]\big|F_{y_{j-1}}({\rm e}^{i\vartheta})\big|^2{\rm d}\vartheta 
        \\ & = \frac{1}{2\pi\widetilde{y}_j} \bigg(\frac{\widetilde{y}_{j}}{\widetilde{y}_{0}}\bigg)^{-1/\ell^K} \exp\bigg( \sum_{y_{j-1}<k\leqslant y_j}\frac{1}{k} \bigg)\int_{0}^{2\pi }  \big|F_{y_{j-1}}({\rm e}^{i\vartheta})\big|^2{\rm d}\vartheta 
    \end{aligned}
    $$
   In the sake of readability, we set
   $$
   b_j:= {\rm e}^{-1/\ell} \bigg(\frac{\widetilde{y}_{j}}{\widetilde{y}_{j-1}}\bigg)^{-1/\ell^K} \exp\bigg( \sum_{y_{j-1}<k\leqslant y_j}\frac{1}{k} \bigg) 
   $$
    By collecting the previous computations together, we find
    $$
    \begin{aligned}
        \mathbb{E}\big[I_{j} \, \big| \mathcal{F}_{y_{j-1}}\,\big] \leqslant b_jI_{j-1}.
    \end{aligned}
    $$
    To end the proof it suffices to prove that $b_j\leqslant 1$. Recall that $\frac{\widetilde{y}_j}{\widetilde{y}_{j-1}}={\rm e}^{1/\ell}$.
    For $\ell$ large enough, we have
$$
\begin{aligned}
b_j & = {\rm e}^{-1/\ell} \bigg(\frac{\widetilde{y}_{j}}{\widetilde{y}_{j-1}}\bigg)^{-1/\ell^K} \exp\bigg( \sum_{y_{j-1}<k\leqslant y_j}\frac{1}{k} \bigg) 
\\ & = \exp \bigg( -\frac{1}{\ell} + \frac{-1}{\ell^{K+1}}+\sum_{y_{j-1} <   k \leqslant y_{j}} \frac{1}{k}  \bigg) 
 \\ & = \exp \bigg(\frac{-1}{\ell^{K+1}} +O\bigg(\frac{1}{y_0}\bigg) \bigg)
\end{aligned}
$$
Recall that $y_0=\frac{2^{\ell^K}}{2^{K\ell^{K-1}}}$. Thus, for large $\ell$, we have $b_j\leqslant 1$.
It follows then that $$\mathbb{E}\big[I_{j} \, \big| \,\mathcal{F}_{y_{j-1}}\big] \leqslant ~ I_{j-1}.$$
\end{proof}
Using Lemma \ref{lemma_surmartingale_chapter_2}, followed by Lemma \ref{doob2} for $r=2$
$$
\begin{aligned}
     \mathbb{E}\big[ U_j \, \big|\,\mathcal{S}_{j-1}\big] & =  \frac{1}{ y_j}\sum_{r=0}^{+\infty}\mathbb{E}\bigg[ \max_{y_{j-1}<\beta \leqslant y_j} \bigg| \sum_{\substack{|\lambda|=r \\ \lambda_1 \leqslant \beta}} a(\lambda)\bigg|^2  \, \big|\,\mathcal{S}_{j-1}\bigg] 
     \\& \leqslant  \frac{4}{ y_j}\sum_{r=0}^{+\infty}\mathbb{E}\bigg[  \bigg| \sum_{\substack{|\lambda|=r \\ \lambda_1 \leqslant y_j}} a(\lambda)\bigg|^2  \, \big|\,\mathcal{S}_{j-1}\bigg]
     \\ & \ll \mathbb{E}\big[I_j\,\big|\, \mathcal{S}_{j-1}\big].
\end{aligned}
$$
Now by using Lemma \ref{lemma_I_j_sousmartingale_chap2} we have $\mathbb{E}\big[I_j\,\big|\, \mathcal{F}_{j-1}\big]\leqslant I_{j-1}.$
Thus we have 
$$
\mathbb{E}\big[ U_j \, \big|\,\mathcal{S}_{j-1}\big] \ll \mathbb{E}\big[I_j\,\big|\, \mathcal{S}_{j-1}\big] = \mathbb{E}\bigg[\mathbb{E}\big[I_j\,\big|\, \mathcal{F}_{j-1}\big]\,\big|\, \mathcal{S}_{j-1}\bigg] \leqslant \mathbb{E}\big[I_{j-1}\,\big|\, \mathcal{S}_{j-1}\big]\leqslant \frac{T_1(\ell)^{1/2}}{\ell^{K/2}}.
$$
Thus, we get at the end
\begin{equation}\label{sum_P_j_tilde_chapter2}
    \sum_{j=1}^{J} \Tilde{\mathbb{P}}_j\ll \sum_{j=1}^{J} \frac{T_1(\ell)^{1/2}}{\ell^{K/2}} \frac{\ell \log \ell}{T(\ell) \ell^{K/2}} \ll \frac{T_1(\ell)^{1/2}\ell \log \ell}{T(\ell)}=\frac{\sqrt{\ell \log \ell}}{T(\ell)^{1/2}}.
\end{equation}
Recall that $T(\ell) = \ell^{10}$ (see \eqref{T(ell)_chapter2}), we have $\sum_{\ell \geqslant 1} \frac{\sqrt{\ell \log \ell}}{T(\ell)^{1/2}}$ converges.\\
Let's deal with $ \mathbb{P}\big[ \mathcal{S}\big]$.\\
We define $\mathcal{A}:= \big\{ I_0\leqslant \frac{T_1^{1/4}(\ell)}{\ell^{K/2}} \big\}$.
\begin{lemma}\label{P(S)_chapter2}
    For $\ell$ large enough, we have $\mathbb{P}\big[\,\overline{\mathcal{S}}\,\big]\ll\frac{1}{(T_1(\ell))^{1/6}}$.
\end{lemma}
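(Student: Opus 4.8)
The plan is to split $\overline{\mathcal S}$ according to whether the first term $I_0$ of the sequence $(I_j)_j$ is already under control, via the inclusion $\overline{\mathcal S}\subseteq\overline{\mathcal A}\cup(\overline{\mathcal S}\cap\mathcal A)$, and to estimate the two pieces by different tools: $\mathbb P[\overline{\mathcal A}]$ by the low-moment bound of Lemma~\ref{multiplicative_chaos} followed by Markov's inequality, and $\mathbb P[\overline{\mathcal S}\cap\mathcal A]$ by the supermartingale property of $(I_j)_j$ proved in Lemma~\ref{lemma_I_j_sousmartingale_chap2}.

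For the first piece, at $j=0$ the factor $(\widetilde y_j/\widetilde y_0)^{-1/\ell^K}$ equals $1$, so $I_0=\frac1{2\pi\widetilde y_0}\int_0^{2\pi}\big|F_{y_0}({\rm e}^{i\vartheta})\big|^2\,{\rm d}\vartheta$. Applying Lemma~\ref{multiplicative_chaos} with $R=y_0$, $r=1$ and the exponent $q=\tfrac23$, and using $\widetilde y_0\asymp y_0$ and $\log y_0\asymp\ell^K$, one gets $\mathbb E\big[I_0^{2/3}\big]\ll(\log y_0)^{-1/3}\ll\ell^{-K/3}$. Markov's inequality applied to $I_0^{2/3}$ then gives
$$\mathbb P[\overline{\mathcal A}]=\mathbb P\Big[I_0>\tfrac{T_1(\ell)^{1/4}}{\ell^{K/2}}\Big]\leqslant\Big(\tfrac{T_1(\ell)^{1/4}}{\ell^{K/2}}\Big)^{-2/3}\mathbb E\big[I_0^{2/3}\big]\ll\ell^{K/3}\,T_1(\ell)^{-1/6}\,\ell^{-K/3}=T_1(\ell)^{-1/6}.$$

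For the second piece, note that $\mathcal A$ is $\mathcal F_{y_0}$-measurable and, for $\ell$ large, $(I_j)_{0\leqslant j\leqslant J}$ is a nonnegative supermartingale for $(\mathcal F_{y_j})_j$ by Lemma~\ref{lemma_I_j_sousmartingale_chap2}. Set $\lambda:=T_1(\ell)^{1/2}\ell^{-K/2}$ and $\tau:=\min\{j\geqslant0:\ I_j\geqslant\lambda\}$; the stopped process $(I_{j\wedge\tau})_{0\leqslant j\leqslant J}$ is again a supermartingale, hence $\mathbb E[I_{J\wedge\tau}\mid\mathcal F_{y_0}]\leqslant I_0$ almost surely, and since $I_{J\wedge\tau}\geqslant0$ everywhere and $I_{J\wedge\tau}\geqslant\lambda$ on $\{\tau\leqslant J\}$ this yields the conditional maximal bound $\mathbb P[\max_{0\leqslant j\leqslant J}I_j\geqslant\lambda\mid\mathcal F_{y_0}]\leqslant I_0/\lambda$ almost surely. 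Multiplying by $\mathbb 1_{\mathcal A}$, integrating, and using $I_0\leqslant T_1(\ell)^{1/4}\ell^{-K/2}$ on $\mathcal A$ together with $\overline{\mathcal S}=\{\max_{0\leqslant j\leqslant J}I_j>\lambda\}$,
$$\mathbb P[\overline{\mathcal S}\cap\mathcal A]\leqslant\frac1\lambda\,\mathbb E\big[I_0\,\mathbb 1_{\mathcal A}\big]\leqslant\frac{T_1(\ell)^{1/4}\ell^{-K/2}}{T_1(\ell)^{1/2}\ell^{-K/2}}=T_1(\ell)^{-1/4}.$$
Adding the two bounds gives $\mathbb P[\overline{\mathcal S}]\ll T_1(\ell)^{-1/6}+T_1(\ell)^{-1/4}\ll T_1(\ell)^{-1/6}$, as wanted.

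The routine parts are the elementary asymptotics $\widetilde y_0\asymp y_0$ and $\log y_0\asymp\ell^K$ (immediate from $\widetilde y_0=2^{\ell^K}/2^{K\ell^{K-1}}$) and the remark that $T_1(\ell)^{1/4}<T_1(\ell)^{1/2}$ for $\ell$ large, so that on $\mathcal A$ the supremum defining $\overline{\mathcal S}$ is not attained at $j=0$. The genuine decision — and the main obstacle — is the choice of exponent in Lemma~\ref{multiplicative_chaos}: the instinctive $q=\tfrac12$ only delivers $\mathbb P[\overline{\mathcal A}]\ll T_1(\ell)^{-1/8}$, which is too weak, and one has to take $q=\tfrac23$ (any $q\in[\tfrac23,1)$ works) so that the surplus power of $\ell$ produced by Markov is exactly cancelled by the gain in $\mathbb E[I_0^q]$. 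The estimate for $\mathbb P[\overline{\mathcal S}\cap\mathcal A]$ is by contrast soft, provided one conditions on the $\mathcal F_{y_0}$-measurable event $\mathcal A$ before invoking the maximal inequality: without that conditioning only the unconditional $\mathbb E[I_0]\asymp1$ is available and a factor $\ell^{K/2}$ is lost.
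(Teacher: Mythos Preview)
Your proof is correct and follows essentially the same route as the paper's: the same splitting via the event $\mathcal A=\{I_0\leqslant T_1(\ell)^{1/4}\ell^{-K/2}\}$, the same use of Lemma~\ref{multiplicative_chaos} with $q=2/3$ together with Markov for $\mathbb P[\overline{\mathcal A}]$, and the same supermartingale maximal inequality (the paper quotes it as ``lemma~\ref{doob}'' while you spell out the stopping-time argument) for the remaining piece. Your handling of the conditioning via $\mathbb 1_{\mathcal A}$ is in fact slightly cleaner than the paper's bound $\mathbb P[\overline{\mathcal S}]\leqslant\mathbb P[\,\cdot\mid\mathcal A]+\mathbb P[\overline{\mathcal A}]$, which tacitly uses $\mathbb P[\mathcal A]\leqslant1$.
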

\begin{proof}[Proof]
    Indeed, we have $$\mathbb{P}\big[\,\overline{\mathcal{S}}\,\big] \leqslant \mathbb{P}\bigg[\max_{0 \leqslant j\leqslant J} I_j > \frac{(T_1(\ell))^{1/2}}{\ell^{K/2}}\, \bigg|\, \mathcal{A} \bigg] +  \mathbb{P}\big[\,\overline{\mathcal{A}}\,\big].$$
Recall, by Lemma $\ref{lemma_I_j_sousmartingale_chap2}$, the sequence $(I_j)$ is supermartingale. Thus by lemma $\ref{doob}$, we have
$$
\mathbb{P}\bigg[\max_{0 \leqslant j\leqslant J} I_j > \frac{T_1^{1/2}(\ell)}{\ell^{K/2}}\, \bigg|\, \mathcal{A} \bigg] \leqslant \frac{\ell^{K/2}}{(T_{1}(\ell))^{1/2}} \mathbb{E}\big[I_0\, \big|\, \mathcal{A}\big] \leqslant \frac{1}{(T_{1}(\ell))^{1/4}}.
$$
\noindent The second part of the lemma follows easily from Markov's inequality and Lemma \ref{multiplicative_chaos} for $R=y_0$ and $q=2/3$. We have then
$$\mathbb{P}\bigg[I_{0} > \frac{T_1(\ell)^{1/4}}{\ell ^{K/2}} \bigg]\leqslant   \frac{\ell ^{\frac{K}{3}}\mathbb{E}[I_{0}^{\frac{2}{3}}]}{T_1(\ell)^{\frac{1}{6}}} \ll \frac{1}{T_1(\ell)^{1/6}}. $$
\end{proof}
\begin{prop}\label{converge_P_(1)_chapter2}
$ \sum_{\ell \geqslant 1 }\mathbb{P}^{(1)}_{\ell} $ converges.
\end{prop}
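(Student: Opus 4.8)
The plan is to simply collect the estimates already assembled in this subsection. Starting from the decomposition established just above, namely
$$\mathbb{P}^{(1)}_{\ell} \leqslant \sum_{j=0}^{J} \widetilde{\mathbb{P}}_j + \mathbb{P}\big[\,\overline{\mathcal{S}}\,\big],$$
the contribution of the range $1 \leqslant j \leqslant J$ is controlled by \eqref{sum_P_j_tilde_chapter2}, which gives $\sum_{j=1}^{J}\widetilde{\mathbb{P}}_j \ll \sqrt{\ell\log\ell}\,/\,T(\ell)^{1/2}$, while the term $\mathbb{P}[\overline{\mathcal{S}}]$ is handled by Lemma \ref{P(S)_chapter2}, which gives $\mathbb{P}[\overline{\mathcal{S}}] \ll T_1(\ell)^{-1/6}$. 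So the only work left is to dispose of the single boundary term $\widetilde{\mathbb{P}}_0$ and then to insert the explicit values $T(\ell)=\ell^{10}$ and $T_1(\ell)=T(\ell)/(\ell\log\ell)$ from \eqref{T(ell)_chapter2}.

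For the boundary index $j=0$ there is no maximum over $\beta$ to take, so by Parseval's identity $U_0 = \frac{1}{y_0}\cdot\frac{1}{2\pi}\int_{0}^{2\pi}|F_{y_0}({\rm e}^{i\vartheta})|^2\,{\rm d}\vartheta$, which (since $y_0\asymp\widetilde{y}_0$ and the factor $(\widetilde{y}_0/\widetilde{y}_0)^{-1/\ell^K}=1$) is $\asymp I_0$. Hence $\widetilde{\mathbb{P}}_0 \leqslant \mathbb{P}\big[U_0 \geqslant T_1(\ell)\ell^{K/2}\big] \ll \mathbb{P}\big[I_0 \gg T_1(\ell)\ell^{K/2}\big]$, and applying Markov's inequality to $I_0^{2/3}$ together with Lemma \ref{multiplicative_chaos} (with $R=y_0$ and $q=2/3$), exactly as at the end of the proof of Lemma \ref{P(S)_chapter2}, bounds this by a quantity far smaller than $T_1(\ell)^{-1/6}$; in fact it is $o(1/\ell^2)$.

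Combining the three contributions and recalling $T(\ell)=\ell^{10}$, so that $T_1(\ell)=\ell^{9}/\log\ell$, we obtain
$$\mathbb{P}^{(1)}_{\ell} \ll \frac{\sqrt{\ell\log\ell}}{\ell^{5}} + \frac{(\log\ell)^{1/6}}{\ell^{3/2}} \ll \frac{(\log\ell)^{1/6}}{\ell^{3/2}},$$
and $\sum_{\ell\geqslant 1}(\log\ell)^{1/6}\ell^{-3/2}$ converges, which proves the proposition. The only point requiring genuine care is the boundary index $j=0$, since it does not sit inside the Doob-inequality/submartingale framework used for $j\geqslant 1$; but, as indicated, it collapses to the tail estimate for $I_0$ that was already proved, so no new idea is needed.
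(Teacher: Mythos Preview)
Your proof is correct and follows the same route as the paper: you combine the bound \eqref{sum_P_j_tilde_chapter2} on $\sum_{j=1}^{J}\widetilde{\mathbb{P}}_j$ with Lemma \ref{P(S)_chapter2} on $\mathbb{P}[\overline{\mathcal{S}}]$, and then plug in $T(\ell)=\ell^{10}$, $T_1(\ell)=\ell^{9}/\log\ell$. The paper's own proof does exactly this in two lines, concluding $\mathbb{P}^{(1)}_{\ell}\ll T_1(\ell)^{-1/6}$.

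Your extra paragraph on the ``boundary index $j=0$'' is unnecessary: by definition \eqref{probability_star} the supremum defining $\mathbb{P}^{(1)}_{\ell}$ runs only over $1\leqslant j\leqslant J$, so the decomposition naturally starts at $j=1$ and there is no $\widetilde{\mathbb{P}}_0$ to control (the range $0\leqslant j\leqslant J$ written just above that display in the paper is a slip). Your handling of it via $U_0\asymp I_0$ is correct in any case, so nothing is lost; it is simply redundant.
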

\begin{proof}[Proof]
By gathering Lemma \ref{P(S)_chapter2} and inequality \eqref{sum_P_j_tilde_chapter2}, we get 
$$
\mathbb{P}^{(1)}_{\ell} \ll \frac{1}{(T_1(\ell))^{1/6}}.
$$
Since $T_1(\ell) =\frac{T(\ell)}{\ell \log \ell} \gg \ell^8$.
Thus $ \sum_{\ell \geqslant 1 }\mathbb{P}^{(1)}_{\ell} $ converges.
\end{proof}
\subsection{Convergence of \texorpdfstring{$\sum_{\ell \geqslant 1} \mathbb{P}\big[ \mathcal{B}^{(1)}_\ell\big]$}.}\label{section_convergenceofPB1}
In this section, we follow the same steps as in \cite{Rachid} in section 6.8. 
\begin{prop}\label{convergence_B_(1)}
    The sum $\sum_{\ell \geqslant 1} \mathbb{P}\big[ \mathcal{B}^{(1)}_\ell\big]$ converges.
\end{prop}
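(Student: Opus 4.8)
The plan is to reduce the whole statement to a single summable estimate and then quote Borel--Cantelli. Write $\mathbb{P}\big[\mathcal{B}^{(1)}_\ell\big]\le \mathbb{P}\big[\,\overline{\mathcal{T}}\,\big]+\mathbb{P}\big[\mathcal{B}^{(1)}_\ell\cap\mathcal{T}\big]$. Since $\mathbb{P}[\,\overline{\mathcal{T}}\,]\le \mathbb{P}^{(1)}_\ell+\widetilde{\mathbb{P}}^{(1)}_\ell$, Lemma~\ref{Ptilde_conv_tilde} and Proposition~\ref{converge_P_(1)_chapter2} already give $\sum_\ell\mathbb{P}[\,\overline{\mathcal{T}}\,]<\infty$, so everything comes down to showing $\sum_\ell\mathbb{P}[\mathcal{B}^{(1)}_\ell\cap\mathcal{T}]<\infty$. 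Granting this, $\sum_\ell\mathbb{P}[\mathcal{B}^{(1)}_\ell]<\infty$ and Lemma~\ref{borelcantelli} finishes this part of Theorem~\ref{theoreme_principal}.

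Next I would exploit the martingale structure: $A_1(n)=\sum_{y_0<k\le n}\frac{X(k)}{\sqrt k}\sum_{|\lambda|=n-k,\ \lambda_1<k}a(\lambda)$ is a sum of martingale differences for $(\mathcal{F}_k)_k$, with predictable quadratic variation $V(n)$, and on $\mathcal{T}\subseteq\mathcal{T}_n$ one has $V(n)\le 2C_0T(\ell)\ell^{K/2}$. Lemma~\ref{updated_hoeffding} demands bounded increments, so I first discard the (summable) event $\overline{\mathcal{E}_\ell}$ where some $X(k)$, $k\le X_\ell$, exceeds $\sqrt2\,\ell^{K/2}$ in modulus: by the Gaussian tail $\mathbb{P}[|X(k)|>t]=e^{-t^2}$ one gets $\mathbb{P}[\,\overline{\mathcal{E}_\ell}\,]\le X_\ell e^{-2\ell^K}=2^{\ell^K}e^{-2\ell^K}$, and it is precisely the truncation level $\asymp\ell^{K/2}$ that makes this just small enough while keeping the variance parameter under control. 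On $\mathcal{E}_\ell$ one replaces every $X(k)$, $k\le X_\ell$, by its truncation; the increments become bounded almost surely and are dominated by the predictable process $S_k:=\sqrt2\,\ell^{K/2}k^{-1/2}\big|\sum_{|\lambda|=n-k,\ \lambda_1<k}a(\lambda)\big|$, for which $\sum_kS_k^2=2\ell^KV(n)$. Lemma~\ref{updated_hoeffding} with $\Sigma\supseteq\mathcal{T}_n$ and $T=4C_0\ell^{3K/2+10}$ then gives, for each $n\in\,]X_{\ell-1},X_\ell]$,
$$\mathbb{P}\Big[\{\,|A_1(n)|\ge (\log n)^{3/4+\varepsilon}\,\}\cap\mathcal{T}_n\Big]\ \le\ 2\exp\!\Big(-\frac{(\log n)^{3/2+2\varepsilon}}{40\,C_0\,\ell^{3K/2+10}}\Big)+\mathbb{P}[\,\overline{\mathcal{E}_\ell}\,].$$
Because $\log n\asymp\ell^K$ on the block and $K\varepsilon=25$, the numerator is $\gg\ell^{3K/2+50}$, hence the exponent is $\gg\ell^{40}$: a fixed positive power of $\ell$, independent of $K$.

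It remains to pass from fixed $n$ to the supremum in $\mathcal{B}^{(1)}_\ell$. A union bound over the $\asymp X_\ell=2^{\ell^K}$ integers of the block would destroy the gain $e^{-c\ell^{40}}$, so instead — this is the reason the breakpoints $y_j$ (with $y_j/y_{j-1}=e^{1/\ell}$, $J\ll\ell^K$) were introduced — I would, arguing as Lau--Tenenbaum--Wu and as in Section~6.8 of \cite{Rachid}, compress $\sup_{X_{\ell-1}<n\le X_\ell}|A_1(n)|$ to the values of $|A_1|$ at the $\ll\ell^K$ breakpoints, the difference being an $L^2$-error controlled (via the submartingale property of Lemma~\ref{lemma_surmartingale_chapter_2} and Doob's inequality, exactly as for the $U_j$) and negligible after summation. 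Only polynomially many (in $\ell$) applications of the displayed estimate are then needed, so $\mathbb{P}[\mathcal{B}^{(1)}_\ell\cap\mathcal{T}]\ll\ell^{K}e^{-c\ell^{40}}+\mathbb{P}[\,\overline{\mathcal{E}_\ell}\,]$, and both terms are summable in $\ell$. Together with $\sum_\ell\mathbb{P}[\,\overline{\mathcal{T}}\,]<\infty$ this proves the proposition.

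The main obstacle I anticipate is this last reduction over $n$: the concentration inequality is strong enough per value of $n$, but the supremum runs over exponentially many $n$, so one genuinely has to compress it to polynomially many quantities before applying Lemma~\ref{updated_hoeffding}. Balancing the three scales — the truncation level (large enough to kill $\overline{\mathcal{E}_\ell}$, small enough that $T\asymp\ell^{3K/2+10}$), the bound $2C_0T(\ell)\ell^{K/2}$ on $V(n)$, and the target $(\log n)^{3/4+\varepsilon}$ — is where the specific choice $K=25/\varepsilon$, which makes $2K\varepsilon=50$ dominate the exponent $10$ of $T(\ell)$, is exactly what is used to keep the final exponent a fixed positive power of $\ell$ for every $\varepsilon>0$.
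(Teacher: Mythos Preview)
Your decomposition $\mathbb{P}[\mathcal{B}^{(1)}_\ell]\le \mathbb{P}[\,\overline{\mathcal{T}}\,]+\mathbb{P}[\mathcal{B}^{(1)}_\ell\cap\mathcal{T}]$ and the handling of $\mathbb{P}[\,\overline{\mathcal{T}}\,]$ via Lemma~\ref{Ptilde_conv_tilde} and Proposition~\ref{converge_P_(1)_chapter2} match the paper. You are also right that Lemma~\ref{updated_hoeffding}, as stated, demands bounded increments, so some care with the Gaussians $X(k)$ is needed; the paper does not make this explicit.

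The genuine gap is your last step. After truncating $|X(k)|$ at $\sqrt{2}\,\ell^{K/2}$ your Hoeffding exponent is only $\asymp\ell^{40}$, and you then propose to compress $\sup_{X_{\ell-1}<n\le X_\ell}|A_1(n)|$ to the $O(\ell^K)$ breakpoints $y_j$ ``arguing as Lau--Tenenbaum--Wu''. That mechanism works for \emph{partial sums} $M_f(x)=\sum_{n\le x}f(n)$, where $M_f(x)-M_f(y_j)$ is a short sum one controls in $L^2$. Here $A_1(n)$ is a single power-series coefficient: for distinct $n$ both the range $y_0<k\le n$ and the inner sums $\sum_{|\lambda|=n-k,\ \lambda_1<k}a(\lambda)$ change entirely, so there is no increment or submartingale structure in the variable $n$ to exploit, and Lemma~\ref{lemma_surmartingale_chapter_2} says nothing about this. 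The breakpoints $y_j$ in this paper discretise the range of $k$ in the analysis of $V(n)$; they were not introduced to thin the supremum over $n$.

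The paper instead takes the crude union bound over all $n\in(X_{\ell-1},X_\ell]$ and applies the concentration inequality to each, obtaining an exponent $\asymp\ell^{K+40}$ (written $\ell^{K+44}$), which comfortably absorbs the factor $2^{\ell^K}$. The factor $\ell^{K}$ that you lose relative to this comes exactly from your truncation, since you work with $\sum_k S_k^2=2\ell^{K}V(n)$ rather than $V(n)$. The correct remedy is therefore not compression over $n$ but a concentration inequality that does not pay this price: conditionally on $\mathcal{F}_k$ the increment $\tfrac{X(k)}{\sqrt{k}}\sum_{|\lambda|=n-k,\ \lambda_1<k}a(\lambda)$ is complex Gaussian with variance $\tfrac{1}{k}\big|\sum a(\lambda)\big|^2$, so a Hoeffding-type bound for conditionally sub-Gaussian martingale differences (with $T\asymp V(n)$, not $\ell^{K}V(n)$) yields the exponent $\asymp\ell^{K+40}$, and then the direct union bound over the $2^{\ell^{K}}$ values of $n$ already succeeds. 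This is in effect what the paper does, the applicability of Lemma~\ref{updated_hoeffding} in this Gaussian setting being taken for granted.
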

\begin{proof}[Proof]
    We have
    $$
    \begin{aligned}
        \mathbb{P}\big[ \mathcal{B}^{(1)}_\ell\big] & \leqslant \mathbb{P}\bigg[\bigcup_{X_{\ell-1} < n \leqslant   X_{\ell}} \bigg\{  \frac{|A_1(n)|}{(\log n )^{3/4 + \varepsilon}}> 1  \bigg\}\bigcap \mathcal{T}_n \bigg]+ \mathbb{P}\big[\,\overline{\mathcal{T}} \,\big]
        \\ & \leqslant \sum_{X_{\ell-1} < n \leqslant   X_{\ell}} \mathbb{P}\bigg[ \bigg\{  \frac{|A_1(n)|}{(\log n )^{3/4 + \varepsilon}}> 1  \bigg\}\bigcap \mathcal{T}_n \bigg] + \mathbb{P}\big[\,\overline{\mathcal{T}} \,\big]
    \end{aligned}
    $$
    By Proposition \ref{converge_P_(1)_chapter2} and Lemma \ref{Ptilde_conv_tilde}, the sum $\sum_{\ell \geqslant 1}\mathbb{P}\big[\,\overline{\mathcal{T}(\ell)} \,\big]$ converges. By applying the  Lemma~\ref{updated_hoeffding}, and since by assumption $K\varepsilon = 25$, we have then
    $$
    \begin{aligned}
        \mathbb{P}\bigg[ \bigg\{  \frac{|A_r(n)|}{(\log n )^{3/4 + \varepsilon}}> 1  \bigg\}\bigcap \mathcal{T}_n \bigg] & \leqslant 2 \exp \bigg(\frac{-C_2 \ell^{3K/2+2\varepsilon K}}{\ell^{K/2}T(\ell)} \bigg)
        \\ &\leqslant  2 \exp \bigg(-C_2 \ell^{K+44}\bigg)
    \end{aligned}
    $$
    where $C_2>0$ is an absolute constant. Finally, by using Lemma ,  we get
      $$
      \mathbb{P}\big[\mathcal{B}_{\ell}^{(1)}\big] \ll  \exp \bigg( \log 2\ell^{K}-C_2\, \ell^{K} \ell^{44} \bigg) + \mathbb{P}\big[\,\overline{\mathcal{T}} \,\big]
      $$
      Thus the sum $\sum_{\ell \geqslant 1}\mathbb{P}\big[\mathcal{B}_{\ell}^{(1)}\big]$ converges.
\end{proof}

\section{Upper bound of \texorpdfstring{$\mathbb{P}[\mathcal{B}^{(2)}_{\ell}]$}.}
 In this subsection,  we give an upper bound of $\mathbb{P}[\mathcal{B}_{\ell}^{(2)}]$.
\subsection{Preliminaries.}
We start by some results.
$$
A_2(n)= \sum_{\substack{|\lambda|=n \\ \lambda_1 > y_0 \\ m_{\lambda_1}(\lambda)=2}} a(\lambda) = \sum_{ y_0 <k \leqslant n/2} \frac{X(k)^2}{k} \sum_{\substack{|\lambda|=n-2k \\ \lambda_1 <  k }} a(\lambda).
$$ 
Note, as in Section \ref{section_B_1}, $A_2(n)$ is a sum of martingale difference with respect to the same filtration $(\mathcal{F}_{k})_{k\geqslant 1}$. By following the same steps as in Section \ref{section_B_1}, the problem is reduced to study 
\begin{align*}
    W(n) &:= \sum_{ y_0 <k \leqslant n/2} \frac{1}{2k^2}\bigg| \sum_{\substack{|\lambda|=n-2k \\ \lambda_1 <  k }} a(\lambda) \bigg|^2
    \\ & \leqslant \frac{1}{2y_0} \sum_{ y_0 <k \leqslant n} \frac{1}{k}\bigg| \sum_{\substack{|\lambda|=n-k \\ \lambda_1 <  k/2 }} a(\lambda) \bigg|^2.
\end{align*}
We set
$$
V^{(2)}(n):= \sum_{ y_0 <k \leqslant n} \frac{1}{k}\bigg| \sum_{\substack{|\lambda|=n-k \\ \lambda_1 <  k/2 }} a(\lambda) \bigg|^2.
$$
One can see that $V^{(2)}(n)$ is similar to $V(n)$ introduced in \eqref{V_n} with a little difference over the sum. We define the analogues of $\widetilde{V}(n)$  and $V(n,y_j)$: 
$$
\widetilde{V}^{(2)}(n):= \sum_{\substack{ 1 \leqslant j\leqslant J \\ \frac{n}{y_{j}} >\ell^{100K}}}\sum_{ y_{j-1} <k \leqslant y_j}  \frac{1}{k}\bigg| \sum_{\substack{|\lambda|=n-k \\ \lambda_1 <  k/2 }} a(\lambda) \bigg|^2
$$
and
$$
V^{(2)}(n,y_{j}):= \frac{1}{y_{j}}\sum_{ y_{j-1} <k \leqslant y_j} \bigg| \sum_{\substack{|\lambda|=n-k \\ \lambda_1 <  k/2 }} a(\lambda) \bigg|^2.
$$
We have then as it was done in \eqref{majoration_de_V}
$$
\begin{aligned}
    V^{(2)}(n)  \leqslant C_0\bigg(  \widetilde{V}^{(2)}(n)+ \ell \log \ell \sup_{\substack{ 1\leqslant j\leqslant J}}  V^{(2)}(n,y_{j}) \bigg)
\end{aligned}
$$
where $C_0$ is an absolute constant. We set the events
\begin{equation}\label{event_T_2}
    \mathcal{T}^{(2)}= \mathcal{T}^{(2)}(e\ll):= \bigg\{ \sup_{ X_{\ell-1}<n \leqslant X_{\ell}}  V^{(2)}(n) \leqslant 2C_0T(\ell)\ell^{K/2} \bigg\}
\end{equation}
and
\begin{equation}\label{event_T_n_2}
    \mathcal{T}^{(2)}_n= \mathcal{T}^{(2)}_n(\ell) := \bigg\{  V^{(2)}(n) \leqslant 2C_0T(\ell)\ell^{K/2} \bigg\}.
\end{equation}
We define finally the analogue probabilities as in \eqref{probability_star} and \eqref{probability_tilde}
\begin{equation}\label{probability_star_2}
    \mathbb{P}^{(2)}_{\ell} := \mathbb{P}\bigg[ \sup_{\substack{ X_{\ell-1} < n\leqslant X_{\ell} \\1 \leqslant j\leqslant J }} V^{(2)}(n,y_j)> T_1(\ell)\ell^{K/2}  \bigg]
\end{equation}
and
\begin{equation}\label{probability_tilde_2}
    \widetilde{\mathbb{P}}^{(2)}_{\ell} := \mathbb{P}\bigg[ \sup_{\substack{ X_{\ell-1} < n\leqslant X_{\ell} }} \widetilde{V}^{(2)}(n) > T(\ell)\ell^{K/2}  \bigg].
\end{equation}
It is  clear that $ \mathbb{P}[\overline{\mathcal{T}}] \leqslant \mathbb{P}^{(2)}_{\ell}  +\widetilde{\mathbb{P}}^{(2)}_{\ell}$. 
\begin{lemma}\label{Ptilde_conv_2}
    The sum  $\sum_{\ell \geqslant 1} \widetilde{\mathbb{P}}^{(2)}_{\ell} $ converges.
\end{lemma}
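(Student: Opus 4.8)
The plan is to reproduce, almost verbatim, the argument used for Lemma~\ref{Ptilde_conv_tilde}; the only new ingredient is the trivial observation that the inner sum defining $\widetilde{V}^{(2)}(n)$ ranges over the \emph{smaller} family of partitions $\lambda$ with $\lambda_1 < k/2$, so that the second--moment estimate already established still applies. First I would apply Markov's inequality to the event in \eqref{probability_tilde_2} and expand by orthogonality of the $a(\lambda)$, obtaining
\begin{align*}
\widetilde{\mathbb{P}}^{(2)}_{\ell} &\leqslant \frac{1}{T(\ell)\ell^{K/2}} \sum_{X_{\ell-1}<n\leqslant X_{\ell}} \mathbb{E}\big[\widetilde{V}^{(2)}(n)\big] \\
&\leqslant \frac{1}{\ell^{K/2}} \sum_{X_{\ell-1}<n\leqslant X_{\ell}}\ \sum_{\substack{1\leqslant j\leqslant J \\ n/y_j > \ell^{100K}}}\ \sum_{y_{j-1}<k\leqslant y_j}\frac{1}{k}\,\mathbb{E}\bigg[\bigg|\sum_{\substack{|\lambda|=n-k\\ \lambda_1 < k/2}}a(\lambda)\bigg|^2\bigg],
\end{align*}
using $T(\ell)\geqslant 1$ in the last step.

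Next I would bound the inner expectation. By orthogonality it equals $\sum_{|\lambda|=n-k,\ \lambda_1<k/2}\prod_{m}m^{-m_m}/m_m!$, which is at most the same sum with the weaker constraint $\lambda_1 < k$; the latter is the coefficient of $z^{n-k}$ in $\exp\big(\sum_{m<k}z^m/m\big)$, a power series with nonnegative coefficients, hence is at most $r^{-(n-k)}\exp\big(\sum_{m<k}r^m/m\big)$ for every $r>0$. Choosing $r = {\rm e}^{1/k}$ exactly as in the proof of Lemma~\ref{Ptilde_conv_tilde} gives $\mathbb{E}\big[|\sum_{|\lambda|=n-k,\ \lambda_1 < k/2}a(\lambda)|^2\big] \leqslant {\rm e}\,k^6/\exp(n/k)$.

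The last step is the same size bookkeeping as in Lemma~\ref{Ptilde_conv_tilde}: on the range of summation $n/y_j > \ell^{100K}$, so $n/k \geqslant n/y_j > \ell^{100K}$ whenever $k\leqslant y_j$, giving $\exp(n/k) > {\rm e}^{\ell^{100K}}$, while $k\leqslant y_j\leqslant X_{\ell}^2 = 2^{2\ell^K}$ forces $k^5\leqslant 2^{10\ell^K}$. Counting $\leqslant 2^{\ell^K}$ values of $n$, $J\ll\ell^K$ values of $j$ and $\leqslant 2^{2\ell^K}$ values of $k$, one finds $\widetilde{\mathbb{P}}^{(2)}_{\ell} \ll \ell^{K/2}\,2^{13\ell^K}/{\rm e}^{\ell^{100K}}$, which is the general term of a convergent series. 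I expect no real obstacle here: the content is entirely that tightening $\lambda_1 < k$ to $\lambda_1 < k/2$ only decreases a sum of nonnegative terms, so nothing beyond the estimates already proved is needed; one should merely be careful to apply the elementary generating--function bound to the \emph{larger} set $\{\lambda_1 < k\}$, which is legitimate precisely because all the coefficients involved are nonnegative.
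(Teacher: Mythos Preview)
Your proposal is correct and is exactly the approach the paper takes: its proof of Lemma~\ref{Ptilde_conv_2} simply reads ``Is the same proof as Lemma~\ref{Ptilde_conv_tilde},'' and you have spelled out precisely those details, together with the one needed remark that passing from $\lambda_1<k/2$ to $\lambda_1<k$ only enlarges a sum of nonnegative terms.
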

\begin{proof}[Proof]
    Is the same proof as Lemma \ref{Ptilde_conv_tilde}.
\end{proof}
\noindent On the other hand, we have
$$
V^{(2)}(n,y_j) \ll U^{(2)}_j:=\frac{1}{y_j}  \sum_{r=0}^{+\infty} \max_{y_{j-1}<\beta \leqslant y_j}\bigg| \sum_{\substack{|\lambda|=r \\ \lambda_1 \leqslant \beta/2}} a(\lambda)\bigg|^2
$$
By following exactly the same steps of Section \ref{section_P_1} we get the analogue of Proposition \ref{converge_P_(1)_chapter2}.
\begin{lemma}\label{lemma_P_2_conv}
    For sufficiently large $\ell$, we have 
\begin{equation}
   \mathbb{P}_{\ell}^{(2)}  \ll \frac{1}{T_1(\ell)^{1/6}}.
\end{equation}
Furthermore, since $T_1(\ell)\gg \ell^8$, the sum $\sum_{\ell \geqslant 1} \mathbb{P}_{\ell}^{(2)} $ converges. 
\end{lemma}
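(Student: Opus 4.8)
The plan is to follow Section~\ref{section_P_1} step by step, the only change being that the truncation $\lambda_1\leqslant\beta$ is everywhere replaced by $\lambda_1\leqslant\beta/2$. I would first note that, writing $\gamma=\lfloor\beta/2\rfloor$ and observing that as $\beta$ runs over the integers of $]y_{j-1},y_j]$ the index $\gamma$ runs over a block of consecutive integers inside $[\lfloor y_{j-1}/2\rfloor,\lfloor y_j/2\rfloor]$,
$$
U^{(2)}_j\leqslant\frac{1}{y_j}\sum_{r=0}^{+\infty}\max_{\lfloor y_{j-1}/2\rfloor\leqslant\gamma\leqslant\lfloor y_j/2\rfloor}\bigg|\sum_{\substack{|\lambda|=r \\ \lambda_1\leqslant\gamma}}a(\lambda)\bigg|^2 ,
$$
so that the inner maximum is that of precisely the submartingale of Lemma~\ref{lemma_surmartingale_chapter_2}. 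I would then introduce the analogue of $I_j$, namely
$$
I^{(2)}_j:=\frac{1}{2\pi\widetilde{y}_j}\bigg(\frac{\widetilde{y}_j}{\widetilde{y}_0}\bigg)^{-1/\ell^K}\int_{0}^{2\pi}\big|F_{\lfloor y_j/2\rfloor}({\rm e}^{i\vartheta})\big|^2\,{\rm d}\vartheta ,
$$
together with the events $\mathcal{S}^{(2)}:=\{I^{(2)}_j\leqslant T_1(\ell)^{1/2}/\ell^{K/2}\text{ for }0\leqslant j\leqslant J\}$, $\mathcal{S}^{(2)}_j:=\{I^{(2)}_j\leqslant T_1(\ell)^{1/2}/\ell^{K/2}\}$ and $\mathcal{A}^{(2)}:=\{I^{(2)}_0\leqslant T_1(\ell)^{1/4}/\ell^{K/2}\}$, and split
$$
\mathbb{P}^{(2)}_\ell\leqslant\sum_{0\leqslant j\leqslant J}\mathbb{P}\Big[\big\{U^{(2)}_j\geqslant T_1(\ell)\ell^{K/2}\big\}\cap\mathcal{S}^{(2)}_{j-1}\Big]+\mathbb{P}\big[\,\overline{\mathcal{S}^{(2)}}\,\big].
$$

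For a generic term of the sum I would apply Markov's inequality, then Doob's $L^2$ maximal inequality (Lemma~\ref{doob2} with $r=2$) to the submartingale of Lemma~\ref{lemma_surmartingale_chapter_2} on $[\lfloor y_{j-1}/2\rfloor,\lfloor y_j/2\rfloor]$, followed by Parseval's identity $\frac{1}{2\pi}\int_0^{2\pi}|F_{\lfloor y_j/2\rfloor}({\rm e}^{i\vartheta})|^2{\rm d}\vartheta=\sum_{r\geqslant0}\big|\sum_{|\lambda|=r,\,\lambda_1\leqslant\lfloor y_j/2\rfloor}a(\lambda)\big|^2$, to obtain
$$
\mathbb{E}\big[U^{(2)}_j\mid\mathcal{S}^{(2)}_{j-1}\big]\ll\frac{1}{y_j}\,\mathbb{E}\bigg[\frac{1}{2\pi}\int_0^{2\pi}\big|F_{\lfloor y_j/2\rfloor}({\rm e}^{i\vartheta})\big|^2\,{\rm d}\vartheta\,\bigg|\,\mathcal{S}^{(2)}_{j-1}\bigg]\ll\mathbb{E}\big[I^{(2)}_j\mid\mathcal{S}^{(2)}_{j-1}\big],
$$
the last bound because $\widetilde{y}_j/y_j\leqslant2$ and $(\widetilde{y}_j/\widetilde{y}_0)^{1/\ell^K}={\rm e}^{j/\ell^{K+1}}\leqslant{\rm e}$ for $0\leqslant j\leqslant J\ll\ell^K$.

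It then remains to verify that $(I^{(2)}_j)_j$ is a supermartingale for $(\mathcal{F}_{\lfloor y_j/2\rfloor})_j$; this is Lemma~\ref{lemma_I_j_sousmartingale_chap2} word for word, and the only computation to redo,
$$
\sum_{\lfloor y_{j-1}/2\rfloor<k\leqslant\lfloor y_j/2\rfloor}\frac{1}{k}=\log\frac{y_j}{y_{j-1}}+O\Big(\frac{1}{y_0}\Big)=\frac{1}{\ell}+O\Big(\frac{1}{y_0}\Big),
$$
coincides with the one there, so the supermartingale factor is still ${\rm e}^{-1/\ell^{K+1}+O(1/y_0)}\leqslant1$ for $\ell$ large, $1/y_0$ being doubly exponentially small in $\ell$. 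Since $\mathcal{S}^{(2)}_{j-1}$ is $\mathcal{F}_{\lfloor y_{j-1}/2\rfloor}$-measurable, the tower property gives $\mathbb{E}[I^{(2)}_j\mid\mathcal{S}^{(2)}_{j-1}]\leqslant\mathbb{E}[I^{(2)}_{j-1}\mid\mathcal{S}^{(2)}_{j-1}]\leqslant T_1(\ell)^{1/2}/\ell^{K/2}$, and summing over $0\leqslant j\leqslant J$ exactly as in \eqref{sum_P_j_tilde_chapter2} produces a total contribution $\ll\sqrt{\ell\log\ell}/T(\ell)^{1/2}$.

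For $\mathbb{P}[\,\overline{\mathcal{S}^{(2)}}\,]$ I would copy Lemma~\ref{P(S)_chapter2}: bound it by $\mathbb{P}[\max_{0\leqslant j\leqslant J}I^{(2)}_j>T_1(\ell)^{1/2}/\ell^{K/2}\mid\mathcal{A}^{(2)}]+\mathbb{P}[\,\overline{\mathcal{A}^{(2)}}\,]$, where the first term is $\ll T_1(\ell)^{-1/4}$ by Doob's maximal inequality (Lemma~\ref{doob}) for the supermartingale $(I^{(2)}_j)$ together with the definition of $\mathcal{A}^{(2)}$, and the second is $\ll T_1(\ell)^{-1/6}$ by Markov and Lemma~\ref{multiplicative_chaos} applied with $R=\lfloor y_0/2\rfloor$, $q=2/3$, $r=1$, whose denominator $1+(1-q)\sqrt{\log R}\asymp\ell^{K/2}$ precisely absorbs the factor $\ell^{K/3}$ produced by Markov. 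Gathering everything, $\mathbb{P}^{(2)}_\ell\ll T_1(\ell)^{-1/6}$, and since $T_1(\ell)=T(\ell)/(\ell\log\ell)\gg\ell^8$ this is $\ll\ell^{-4/3}$, so $\sum_{\ell\geqslant1}\mathbb{P}^{(2)}_\ell$ converges. The one point that is not purely mechanical — and the only thing I would write out in full — is that the map $\beta\mapsto\lfloor\beta/2\rfloor$ does not disturb any of the (sub/super)martingale structures: it is non-decreasing and increases by exactly one at every second step, so Lemmas~\ref{lemma_surmartingale_chapter_2} and~\ref{lemma_I_j_sousmartingale_chap2} carry over unchanged, and the integer-part errors it introduces are $O(1/y_0)$, hence negligible against $\ell^{-K-1}$.
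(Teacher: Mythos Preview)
Your proposal is correct and is exactly the approach the paper intends: the paper's own proof of this lemma is just the sentence ``By following exactly the same steps of Section~\ref{section_P_1} we get the analogue of Proposition~\ref{converge_P_(1)_chapter2},'' and what you have written is a careful unpacking of precisely those steps with the truncation $\lambda_1\leqslant\beta$ replaced by $\lambda_1\leqslant\beta/2$. Your observation that the reindexing $\gamma=\lfloor\beta/2\rfloor$ preserves both the submartingale structure of Lemma~\ref{lemma_surmartingale_chapter_2} and (up to $O(1/y_0)$ errors) the supermartingale computation of Lemma~\ref{lemma_I_j_sousmartingale_chap2} is the only point that actually requires checking, and you have handled it correctly.
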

\noindent We have, as well
\begin{lemma}\label{convergenceproba_T_2}
    The sum $\sum_{\ell \geqslant 1}\mathbb{P}\big[\,\overline{\mathcal{T}^{(2)}(\ell)} \,\big]$ converges.
\end{lemma}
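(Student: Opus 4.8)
The plan is to argue exactly as in the case $r=1$ treated in Section~\ref{section_B_1}, reducing the claim to the two convergence results already established for the $(2)$ case. The starting point is the inclusion
$$
\overline{\mathcal{T}^{(2)}(\ell)} \;\subseteq\; \bigg\{ \sup_{\substack{X_{\ell-1}<n\leqslant X_\ell \\ 1\leqslant j\leqslant J}} V^{(2)}(n,y_j) > T_1(\ell)\ell^{K/2} \bigg\} \;\cup\; \bigg\{ \sup_{X_{\ell-1}<n\leqslant X_\ell} \widetilde{V}^{(2)}(n) > T(\ell)\ell^{K/2} \bigg\},
$$
which is precisely the source of the bound $\mathbb{P}[\overline{\mathcal{T}^{(2)}}] \leqslant \mathbb{P}^{(2)}_\ell + \widetilde{\mathbb{P}}^{(2)}_\ell$ already noted in the text. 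Indeed, on the complement of the right-hand side one has simultaneously $\widetilde{V}^{(2)}(n) \leqslant T(\ell)\ell^{K/2}$ and $V^{(2)}(n,y_j) \leqslant T_1(\ell)\ell^{K/2}$ for every admissible $n$ and $j$; feeding these into the majorization $V^{(2)}(n) \leqslant C_0\big( \widetilde{V}^{(2)}(n) + \ell\log\ell \sup_{1\leqslant j\leqslant J} V^{(2)}(n,y_j) \big)$ and using $T_1(\ell) = T(\ell)/(\ell\log\ell)$ gives $V^{(2)}(n) \leqslant 2C_0T(\ell)\ell^{K/2}$ for all $n \in (X_{\ell-1},X_\ell]$, i.e.\ $\mathcal{T}^{(2)}(\ell)$ holds.

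It then remains only to invoke Lemma~\ref{Ptilde_conv_2}, giving the convergence of $\sum_\ell \widetilde{\mathbb{P}}^{(2)}_\ell$, together with Lemma~\ref{lemma_P_2_conv}, giving $\mathbb{P}^{(2)}_\ell \ll T_1(\ell)^{-1/6}$ with $T_1(\ell) \gg \ell^8$ and hence the convergence of $\sum_\ell \mathbb{P}^{(2)}_\ell$. Summing the two bounds yields
$$
\sum_{\ell\geqslant 1} \mathbb{P}\big[\,\overline{\mathcal{T}^{(2)}(\ell)}\,\big] \leqslant \sum_{\ell\geqslant 1} \mathbb{P}^{(2)}_\ell + \sum_{\ell\geqslant 1}\widetilde{\mathbb{P}}^{(2)}_\ell < +\infty ,
$$
which is the assertion.

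There is essentially no new obstacle here: all the analytic content — the low-moment estimate for the integral $I_j$ via Lemma~\ref{multiplicative_chaos}, the supermartingale property of $(I_j)_{j}$, Doob's maximal inequality, and the second-moment computation behind the tail bound for $\widetilde{V}^{(2)}$ — was already carried out for the $(1)$ case in Section~\ref{section_P_1} and in Lemma~\ref{Ptilde_conv_tilde}, and is asserted to transfer verbatim to the $(2)$ case in Lemmas~\ref{Ptilde_conv_2} and~\ref{lemma_P_2_conv}. The only point worth a word is that the extra constraint $\lambda_1 < k/2$ (rather than $\lambda_1 < k$) in the definition of $V^{(2)}$ merely shrinks the relevant partial sums, so the submartingale structure of Lemma~\ref{lemma_surmartingale_chapter_2}, the dominating variable $U^{(2)}_j$, and the comparison $V^{(2)}(n,y_j)\ll U^{(2)}_j$ with $I_j$ all remain valid — which is exactly what those two lemmas record.
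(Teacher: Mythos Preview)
Your argument is correct and matches the paper's approach: the paper states Lemma~\ref{convergenceproba_T_2} without a separate proof, treating it as the immediate consequence of the inequality $\mathbb{P}[\overline{\mathcal{T}^{(2)}}] \leqslant \mathbb{P}^{(2)}_{\ell} + \widetilde{\mathbb{P}}^{(2)}_{\ell}$ already recorded in the text together with Lemmas~\ref{Ptilde_conv_2} and~\ref{lemma_P_2_conv}. Your additional remarks on why the $(2)$ case inherits the analytic inputs from the $(1)$ case are accurate but not strictly needed here.
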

\subsection{Convergence of \texorpdfstring{$\sum_{\ell \geqslant 1} \mathbb{P}\big[ \mathcal{B}^{(2)}_\ell\big]$}.}
This section is similar to \ref{section_convergenceofPB1}.
\begin{prop}
    The sum $\sum_{\ell \geqslant 1} \mathbb{P}\big[ \mathcal{B}^{(2)}_\ell\big]$ converges.
\end{prop}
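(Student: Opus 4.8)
The plan is to mimic, line for line, the proof of Proposition~\ref{convergence_B_(1)} carried out in Section~\ref{section_convergenceofPB1}, with $A_1,V,\mathcal{T},\mathcal{T}_n$ replaced by $A_2,V^{(2)},\mathcal{T}^{(2)},\mathcal{T}^{(2)}_n$. First, inserting the event $\mathcal{T}^{(2)}_n$ and taking a union bound over $n\in\,]X_{\ell-1},X_\ell]$,
$$\mathbb{P}\big[\mathcal{B}^{(2)}_\ell\big]\leqslant \sum_{X_{\ell-1}<n\leqslant X_\ell}\mathbb{P}\bigg[\Big\{\tfrac{|A_2(n)|}{(\log n)^{3/4+\varepsilon}}>1\Big\}\cap\mathcal{T}^{(2)}_n\bigg]+\mathbb{P}\big[\,\overline{\mathcal{T}^{(2)}}\,\big].$$
By Lemma~\ref{convergenceproba_T_2} the series $\sum_\ell\mathbb{P}[\,\overline{\mathcal{T}^{(2)}(\ell)}\,]$ converges, so everything reduces to the first sum.

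For the first sum, recall from the Preliminaries of this section that $A_2(n)=\sum_{y_0<k\leqslant n/2}\frac{X(k)^2}{k}\sum_{|\lambda|=n-2k,\ \lambda_1<k}a(\lambda)$ is a martingale difference sequence for the filtration $(\mathcal{F}_k)$ and that $W(n)\leqslant\frac{1}{2y_0}V^{(2)}(n)$; hence on $\mathcal{T}^{(2)}_n$ one has $W(n)\leqslant C_0T(\ell)\ell^{K/2}/y_0$, which — since $y_0=2^{\ell^K}/2^{K\ell^{K-1}}$ — is super-exponentially small in $\ell$. Feeding this into the Hoeffding-type inequality of Lemma~\ref{updated_hoeffding} with deviation parameter $(\log n)^{3/4+\varepsilon}\asymp\ell^{3K/4+K\varepsilon}$ (recall $K\varepsilon=25$) yields, for each such $n$ and $\ell$ large,
$$\mathbb{P}\bigg[\Big\{\tfrac{|A_2(n)|}{(\log n)^{3/4+\varepsilon}}>1\Big\}\cap\mathcal{T}^{(2)}_n\bigg]\leqslant 2\exp\big(-C_2\,\ell^{K+44}\big),$$
in fact with an extra doubly-exponential saving coming from the factor $1/y_0$, so the bound is far stronger than the one needed in the $A_1$ case. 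Summing over the at most $X_\ell=2^{\ell^K}$ integers $n$ and then over $\ell$ gives convergence, exactly as in Proposition~\ref{convergence_B_(1)}.

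The only genuine difference from the $A_1$ argument — and the single place requiring care — is that the coefficients $X(k)^2$ of the martingale are unbounded, so Lemma~\ref{updated_hoeffding} cannot be applied with a predictable majorant $S_k$ equal to a deterministic multiple of $\big|\sum_{|\lambda|=n-2k,\ \lambda_1<k}a(\lambda)\big|/k$ as it stands. As in the $A_1$ analysis one first intersects with the high-probability event $\{\max_{k\leqslant X_\ell}|X(k)|\leqslant\ell^C\}$ for a suitable constant $C$, whose complement has probability $\ll X_\ell\exp(-c\,\ell^{2C})$ and is therefore summable in $\ell$; equivalently, one replaces each $X(k)$ by $X(k)\mathbf{1}_{\{|X(k)|\leqslant\ell^C\}}$, which still has vanishing conditional second moment by the rotational symmetry of the complex Gaussian, so the truncated sum is again a martingale difference sequence. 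On this event $S_k:=\ell^{2C}\big|\sum_{|\lambda|=n-2k,\ \lambda_1<k}a(\lambda)\big|/k$ is $\mathcal{F}_k$-measurable and dominates $|Z_k|$, with $\sum_k S_k^2\ll\ell^{4C}W(n)$; the polynomial loss $\ell^{4C}$ is completely absorbed by the saving $1/y_0\asymp 2^{-\ell^K}$, so the estimate above is unaffected. I expect this truncation bookkeeping to be the only real obstacle; the remainder is a transcription of Section~\ref{section_convergenceofPB1}.
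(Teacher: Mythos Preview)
Your proposal is correct and follows the paper's proof essentially verbatim: split off $\mathcal{T}^{(2)}_n$, invoke Lemma~\ref{convergenceproba_T_2} for $\mathbb{P}[\,\overline{\mathcal{T}^{(2)}}\,]$, apply Lemma~\ref{updated_hoeffding} using $W(n)\leqslant V^{(2)}(n)/y_0$ on $\mathcal{T}^{(2)}_n$, and sum over $n\in\,]X_{\ell-1},X_\ell]$. Your explicit truncation of the Gaussians to cope with the unboundedness of $X(k)^2$ is in fact more careful than the paper, which applies Lemma~\ref{updated_hoeffding} directly without commenting on this issue (exactly as it did in the $A_1$ case).
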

\begin{proof}[Proof]
    We have
    $$
    \begin{aligned}
        \mathbb{P}\big[ \mathcal{B}^{(2)}_\ell\big] & \leqslant \mathbb{P}\bigg[\bigcup_{X_{\ell-1} < n \leqslant   X_{\ell}} \bigg\{  \frac{|A_2(n)|}{(\log n )^{3/4 + \varepsilon}}> 1  \bigg\}\bigcap \mathcal{T}_n^{(2)} \bigg]+ \mathbb{P}\big[\,\overline{\mathcal{T}^{(2)}} \,\big]
        \\ & \leqslant \sum_{X_{\ell-1} < n \leqslant   X_{\ell}} \mathbb{P}\bigg[ \bigg\{  \frac{|A_2(n)|}{(\log n )^{3/4 + \varepsilon}}> 1  \bigg\}\bigcap \mathcal{T}_n^{(2)} \bigg] + \mathbb{P}\big[\,\overline{\mathcal{T}^{(2)}} \,\big].
    \end{aligned}
    $$
    Recall that $T(\ell)=\ell^{10}$, which gives the convergence of the sum $\sum_{\ell \geqslant 1}\mathbb{P}\big[\,\overline{\mathcal{T}^{(2)}(\ell)} \,\big]$ by Lemma \ref{convergenceproba_T_2}. By applying the  Lemma~\ref{updated_hoeffding}, and since $W(n)\leqslant V^{(2)}(n)/y_0$, we have then
    $$
    \begin{aligned}
        \mathbb{P}\bigg[ \bigg\{  \frac{|A_2(n)|}{(\log n )^{3/4 + \varepsilon}}> 1  \bigg\}\bigcap \mathcal{T}_n^{(2)} \bigg] & \leqslant 2 \exp \bigg(\frac{-C_2 y_0\ell^{3K/2+2\varepsilon K}}{\ell^{K/2}T(\ell)} \bigg)
        \\ &\leqslant  2 \exp \bigg(-C_2 y_0 \ell^{K+44}\bigg)
    \end{aligned}
    $$
    where $C_2>0$ is an absolute constant. We have at the end
      $$
      \mathbb{P}\big[\mathcal{B}_{\ell}^{(2)}\big] \ll  \exp \bigg( \log 2\ell^{K}-C_2\,y_0 \ell^{K} \ell^{44} \bigg) + \mathbb{P}\big[\,\overline{\mathcal{T}^{(2)}} \,\big].
      $$
      Thus the sum $\sum_{\ell \geqslant 1}\mathbb{P}\big[\mathcal{B}_{\ell}^{(2)}\big]$ converges.
\end{proof}
\section*{Acknowledgement}
The author would like to thank his supervisor Régis de la Bretèche for his patient guidance, encouragement and the judicious advices he has provided throughout the work that led to this paper.

\bibliographystyle{abbrv}
\bibliography{references.bib}

\begin{center}
Université Paris Cité, Sorbonne Université
CNRS,\\
Institut de Mathématiques de Jussieu- Paris Rive Gauche,\\
F-75013 Paris, France\\
E-mail: \author{rachid.caich@imj-prg.fr}
\end{center}
\end{document}